\documentclass[11pt,reqno]{amsart}
\usepackage{a4wide}

\usepackage{mathrsfs}
\usepackage{amsfonts}
\usepackage{amsmath}
\usepackage{stmaryrd}
\usepackage{amssymb}
\usepackage{amsthm}
\usepackage{esint}

\usepackage{url}

\usepackage{amscd}
\usepackage{indentfirst}
\usepackage{enumerate}
\usepackage{graphicx}
\usepackage{appendix}

\usepackage[numbers,sort&compress]{natbib}

\usepackage{color}
\usepackage[colorlinks, linkcolor=blue, citecolor=blue]{hyperref}

\tolerance=1
\emergencystretch=\maxdimen
\hyphenpenalty=10000
\hbadness=10000

\newcommand{\pa}{\partial}

\renewcommand{\div}{{\rm div}}
\newcommand{\curl}{{\rm curl}}
\newcommand{\norm}[1]{\left\|#1\right\|}

\newtheorem{thm}{Theorem}[section]
\newtheorem{lem}[thm]{Lemma}

\newtheorem{rem}[thm]{Remark}
\newtheorem{defi}[thm]{Definition}
\numberwithin{equation}{section}

\begin{document}

\title[Blowup criterion for the 3D barotropic compressible Navier-Stokes equations]
{A new blowup criterion for the 3D barotropic compressible Navier-Stokes equations with vacuum}
\author[S. Xu]{Saiguo Xu}
\address[Saiguo Xu]{School of Mathematics and Statistics, Wuhan University, Wuhan, Hubei 430072, P.R. China.}
\email{xsgsxx@126.com}
\author[Y.H. Zhang]{Yinghui Zhang*}
\address[Yinghui Zhang] {School of Mathematics and Statistics, Guangxi Normal University, Guilin, Guangxi 541004, P.R.
China} \email{yinghuizhang@mailbox.gxnu.edu.cn}

\thanks{* Corresponding author.}

\thanks{This work was supported by National Natural Science
Foundation of China $\#$12271114, Guangxi Natural Science Foundation $\#$2024GXNSFDA010071, $\#$2019JJG110003, $\#$2019AC20214, Science and Technology Project of Guangxi $\#$GuikeAD21220114, the Innovation Project of Guangxi Graduate Education $\#$JGY2023061, and the Key Laboratory of Mathematical Model and Application (Guangxi Normal University), Education Department of Guangxi Zhuang Autonomous Region.}

\date{\today}

\begin{abstract}
We investigate the blowup criterion of the barotropic compressible viscous fluids for the Cauchy problem, Dirichlet problem and Navier-slip boundary condition. The main novelty of this paper is two-fold:
First, for the Cauchy problem and Dirichlet problem, we prove that a
strong or smooth solution exists globally, provided that the vorticity of velocity satisfies Serrin's
condition and the maximum norm of the divergence
of the velocity is bounded.
Second, for the Navier-slip boundary condition, we show that if both the maximum norm of the vorticity of velocity and
the maximum norm of the divergence of velocity are bounded, then the solution exists globally.
In particular, this criterion extends the well-known
Beale-Kato-Majda's blowup criterion for the 3D incompressible Euler equations (Comm. Math. Phys. 94(1984):61-66) to the
3D barotropic compressible Navier-Stokes equations, and can be regarded as a complement for the work by Huang-Li-Xin (Comm. Math. Phys. 301(2011):23-35). The vacuum is allowed to exist here.

\end{abstract}

\maketitle

{\small
\keywords {\noindent {\bf Keywords:} {Compressible Navier-Stokes; blowup; strong solution; vacuum.}
\smallskip
\newline
\subjclass{\noindent {\bf 2020 Mathematics Subject Classification:} 35Q35; 35B65; 76N10}
}

\section{Introduction}
The motion of a general viscous compressible barotropic fluid in a domain $\Omega\subset\mathbb{R}^3$ is governed by the compressible Navier-Stokes equations
\begin{equation}\label{CNS-eq}
  \begin{cases}
    \pa_t\rho+\div(\rho u)=0, \\
    \pa_t(\rho u)+\div(\rho u\otimes u)-\mu\Delta u-(\mu+\lambda)\nabla\div u+\nabla P(\rho)=0,
  \end{cases}
\end{equation}
where $\rho$, $u$, $P$ denote the density, velocity and pressure respectively, $\mu$ and $\lambda$ represent the shear viscosity and bulk viscosity coefficients which satisfy the physical restrictions:
\begin{equation}
  \mu>0,\quad \lambda+\frac{2}{3}\mu>0.
\end{equation}
The state equation of pressure is given by
\begin{equation*}
  P(\rho)=a\rho^{\gamma}, \quad a>0,\,\gamma>1.
\end{equation*}
The equations \eqref{CNS-eq} will be equipped with initial data
\begin{equation}\label{initial-data}
  (\rho, u)(x,0)=(\rho_0, u_0)(x), \quad x\in\Omega
\end{equation}
and three types of boundary conditions:
\begin{enumerate}[(1)]
  \item Cauchy problem:
  \begin{equation}\label{Cauchy-condition}
    \Omega=\mathbb{R}^3~\textrm{and (in some weak sense)},\, (\rho, u)~\textrm{vannish at }\infty;
  \end{equation}
  \item Dirichlet problem: $\Omega$ is a bounded smooth domain in $\mathbb{R}^3$, and
      \begin{equation}\label{Dirichlet-condition}
        u=0\quad\textrm{ on }\pa\Omega;
      \end{equation}
  \item Navier-slip boundary condition: $\Omega\subset\mathbb{R}^3$ is a bounded and simply connected smooth domain, and
      \begin{equation}\label{Naiver-slip-condition}
        u\cdot n=0,\quad \curl u\times n=0 \textrm{ on }\pa\Omega
      \end{equation}
      \noindent where $n$ is the unit outer normal to $\pa\Omega$. The first condition in \eqref{Naiver-slip-condition} is the non-penetration boundary condition, while the second one is also known as
      \begin{equation*}
        (\mathcal{D}(u)\cdot n)_{\tau}=-\kappa_{\tau}u_{\tau}
      \end{equation*}
      where $\mathcal{D}(u)$ is the deformation tensor:
      \begin{equation*}
        \mathcal{D}(u)=\frac{1}{2}(\nabla u+\nabla u^t),
      \end{equation*}
      and $\kappa_{\tau}$ is the corresponding principal curvature of $\pa\Omega$. Note that $\nabla u$ has the decomposition as
      \begin{equation*}
        \nabla u=\mathcal{D}(u)+\mathcal{S}(u),
      \end{equation*}
      where
      \begin{equation*}
        \mathcal{S}(u)=\frac{1}{2}(\nabla u-\nabla u^t)
      \end{equation*}
  is known as the rigid body rotation tensor.

\end{enumerate}

There are large amounts of literature concerning the large time existence and behavior of solutions to the equations \eqref{CNS-eq}. The one-dimensional problem has been investigated extensively by many people, see for instance \cite{Hoff1987,Kazhikhov1977,Serre1986-1,Serre1986-2} and references therein. The local well-posedness of multidimensional problem was studied by Nash \cite{Nash1962} and Serrin \cite{Serrin1959} in the absence of vacuum, while for the initial density with vacuum, the existence and uniqueness of local strong solution was proved in \cite{Cho2004,Cho2006-1,Cho2006-2,Choe2003,Salvi1993}. Matsumura and Nishida \cite{Matsumura1980} proved the global existence of strong solutions with initial data close to the equilibrium, and later was extended to the discontinuous initial data by Hoff \cite{Hoff1991,Hoff1995}. In the last decade, many important progress on global existence and uniqueness of classical solutions with large oscillations and vacuum to \eqref{CNS-eq} have been made, see for instance \cite{Huang-Li2012,Huang-Li2022,Cai-Li2023,Zhu2024} and references therein. For the existence of solutions for arbitrary large initial data in 3D, the major breakthrough was made by Lions \cite{Lions1998}, where he showed global existence of weak solutions for the whole space, periodic domains, or bounded domains with Dirichlet boundary conditions with $\gamma>\frac{9}{5}$, and later was extended to $\frac{3}{2}$ by Feireisl \cite{Feireisl2001,Feireisl2004,Feireisl2004B}. When the initial data are
assumed to have some spherically symmetric or axisymmetric properties, Jiang and Zhang \cite{Jiang2001}
proved the existence of global weak solutions for any $\gamma>1$.

However, up to now, the uniqueness and regularity for weak solutions in \cite{Lions1998,Feireisl2001} still remains open.
Desjardins in \cite{Desjardins1997} proved higher regularity of local weak solutions of the barotropic compressible
Navier-Stokes equations in dimension two or three under periodic
boundary conditions. It is worth mentioning that one would not expect too much regularity of Lions's weak solutions
in general due to the work of Xin \cite{Xin1998}, where it was shown that there is no global smooth solution to the Cauchy problem of \eqref{CNS-eq} with a nontrivial compactly supported initial density at least in 1D. Later, Xin and Yan \cite{Xin2013} investigated the same problem and proved the smooth solution will blow up in finite time provided that the initial data have an isolated mass group. Recently, Li, Wang and Xin \cite{Li-Xin2019} also gave the ill-posedness of smooth solution to the similar problem. In view of the above results, it is important to study the mechanism of blowup and structure of possible singularities of strong (or classical) solutions to \eqref{CNS-eq} with the initial data allowed to vanish.
For the 3D incompressible Euler equations, Beale-Kato-Majda \cite{Beale1984} presented a sufficient condition for the blowup of
strong solutions, which is the well-known Beale-Kato-Majda blowup criterion, namely,
\begin{equation}\label{BKM-criterion1}
  \lim_{T\rightarrow T^*}\norm{\mathcal{S}(u)}_{L^1(0,T;L^{\infty})}=\infty,
\end{equation}
where $T^*$ is the maximal time of existence of a strong solution and $\mathcal{S}(u)=\frac{1}{2}(\nabla u-\nabla u^t)$
  is the rigid body rotation tensor. Later, Ponce \cite{Ponce1985} gave a different
formulation of the Beale-Kato-Majda blowup criterion in terms of deformation tensor $\mathcal{D}(u)=\frac{1}{2}(\nabla u+\nabla u^t)$, namely,
  \begin{equation}\label{BKM-criterion2}
  \lim_{T\rightarrow T^*}\norm{\mathcal{D}(u)}_{L^1(0,T;L^{\infty})}=\infty.
\end{equation}
Moreover, Constantin \cite{Peter1995} rephrased the Beale-Kato-Majda blowup criterion as follows:
 \begin{equation}\label{BKM-criterion3}
  \lim_{T\rightarrow T^*}\norm{((\nabla u)\xi)\cdot\xi}_{L^1(0,T;L^{\infty})}=\infty,
\end{equation}
where $\xi$ is the unit vector in the direction of vorticity $\textrm{curl} u$.
Recently, Huang-Li-Xin \cite{Huang2011-1} extended the Beale-Kato-Majda type blowup criterion of \cite{Ponce1985} to the 3D barotropic compressible Navier-Stokes equations.
Furthermore, Huang-Li-Xin \cite{Huang2011-1} gave a Serrin type blowup criterion (originated from \cite{Serrin1962}) for the full compressible Navier-Stokes equations, namely,
\begin{equation}\label{Serrin-criterion4}
  \lim_{T\rightarrow T^*}(\norm{\rho}_{L^{\infty}(0,T;L^{\infty})}+\norm{\sqrt{\rho}u}_{L^s(0,T;L^r)})=\infty,
\end{equation}
where
\begin{equation}\label{Serrin-index}
  \frac{2}{s}+\frac{3}{r}\leq1,\quad 3<r\leq\infty.
\end{equation}
These two types of blowup criteria indicate the different directions to control the global regularity of the 3D barotropic compressible Navier-Stokes equations, which have been developed in many different forms. We refer to \cite{Fan2008,Huang2010,Sun2011,Wen2013,Du2014,Wang2020} and references therein.

This paper is strongly motivated by Beale-Kato-Majda \cite{Beale1984} and Huang-Li-Xin \cite{Huang2011-1} and investigates the Beale-Kato-Majda type blowup criterion for the 3D barotropic compressible Navier-Stokes equations in terms of the
the rigid body rotation tensor $\mathcal{S}(u)$.
More precisely, we establish the Beale-Kato-Majda type blowup criterion for the 3D barotropic compressible Navier-Stokes equations
subject to the Cauchy problem, Dirichlet problem and Navier-slip boundary condition in terms of the
the rigid body rotation tensor $\mathcal{S}(u)$.
In particular, this criterion extends the well-known
Beale-Kato-Majda's blowup criterion for the 3D incompressible Euler equations \cite{Beale1984} to the
3D barotropic compressible Navier-Stokes equations, and can be regarded as a complement for the result of Huang-Li-Xin \cite{Huang2011-1}.

Before stating our result, let us introduce the following notations and conventions used throughout this paper. We denote
\begin{equation*}
  \int fdx=\int_{\Omega}fdx.
\end{equation*}
For $1\leq r\leq\infty$, integer $k\geq1$, we denote the standard Sobolev spaces as follows:
\begin{equation*}
  \begin{cases}
     L^r=L^r(\Omega),\,D^{k,r}=\{u\in L_{loc}^1(\Omega): \norm{\nabla^ku}_{L^r}<\infty\},\\
     W^{k,r}=L^r\cap D^{k,r},\,H^k=W^{k,2},\, D^k=D^{k,2},\\
     D_0^1=\{u\in L^6: \norm{\nabla u}_{L^2}<\infty,\textrm{ and \eqref{Cauchy-condition} or \eqref{Dirichlet-condition} or \eqref{Naiver-slip-condition} holds}\},\\
    H_0^1=L^2\cap D_0^1,\,\norm{u}_{D^{k,r}}=\norm{\nabla^ku}_{L^r}.
  \end{cases}
\end{equation*}

As mentioned above, for initial density allowed to vanish, the local existence and uniqueness of strong (or smooth) solutions have been proved by \cite{Cho2004,Cho2006-1,Cho2006-2,Choe2003,Salvi1993}. Before stating their local existence results, let us first give the definition of strong solutions.

\begin{defi}[Strong solutions]
  $(\rho,u)$ is called a strong solution to the system \eqref{CNS-eq} in $\Omega\times(0,T)$ if $(\rho,u)$ satisfies \eqref{CNS-eq} a.e. in $\Omega\times(0,T)$, and for some $q_0\in(3,6]$,
  \begin{equation}\label{strong-sol-defi}
    \begin{aligned}
    &0\leq\rho\in C([0,T], W^{1,q_0}),\, \rho_t\in C([0,T], L^{q_0}),\\
    &u\in C([0,T], D_0^1\cap D^2)\cap L^2(0,T; D^{2,q_0}),\\
    &\sqrt{\rho}u_t\in L^{\infty}(0,T; L^2),\,u_t\in L^2(0,T; D_0^1).
    \end{aligned}
  \end{equation}
\end{defi}
Then Cho \cite{Cho2004} proved the following local result.
\begin{thm}\label{thm-local-existence}
  If the initial data $(\rho_0, u_0)$ satisfies
  \begin{equation}\label{initial-data-condition}
    0\leq\rho_0\in L^1\cap W^{1,\tilde{q}}, \quad u_0\in D_0^1\cap D^2,
  \end{equation}
  for some $\tilde{q}\in(3,\infty)$ and the compatibility condition:
  \begin{equation}\label{compatibility-condition}
    -\mu\Delta u_0-(\lambda+\mu)\nabla\div u_0+\nabla P(\rho_0)=\rho_0^{\frac{1}{2}}g, \textrm{ for some }g\in L^2,
  \end{equation}
  then there exists $T_1\in(0,\infty)$ and a unique strong solution $(\rho,u)$ to the initial boundary value problem \eqref{CNS-eq}\eqref{initial-data} with \eqref{Cauchy-condition} or \eqref{Dirichlet-condition} or \eqref{Naiver-slip-condition} in $\Omega\times(0,T_1]$. Furthermore, the following blowup criterion holds: if $T^*$ is the maximial time of existence of the strong solution $(\rho,u)$ and $T^*<\infty$, then
  \begin{equation}
    \lim_{t\rightarrow T^*}(\norm{\rho}_{H^1\cap W^{1,q_0}}+\norm{u}_{D_0^1})=\infty,
  \end{equation}
  with $q_0=\min\{6,\tilde{q}\}$.
\end{thm}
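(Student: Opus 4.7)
The plan is to construct the solution via a classical linearization/iteration scheme, and then deduce the blowup criterion by restarting the local existence near $T^*$. Given $(\rho^k,u^k)$, I would define $(\rho^{k+1},u^{k+1})$ by first solving the linear transport equation $\pa_t\rho^{k+1}+\div(\rho^{k+1}u^k)=0$ with initial datum $\rho_0$, which preserves positivity and the $W^{1,\tilde{q}}$ regularity via the method of characteristics (or, on a bounded domain, via a smoothing of $u^k$ near $\pa\Omega$), and then solving the linear degenerate parabolic system
\begin{equation*}
\rho^k\pa_t u^{k+1}+\rho^k u^k\cdot\nabla u^{k+1}-\mu\Delta u^{k+1}-(\mu+\lambda)\nabla\div u^{k+1}=-\nabla P(\rho^{k+1})
\end{equation*}
with initial datum $u_0$ and the appropriate boundary condition. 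Since $\rho^k$ may vanish, one cannot divide by it; I would solve the momentum system by Galerkin approximation together with a vanishing regularization $\rho^k\rightsquigarrow\rho^k+\varepsilon$, and then pass $\varepsilon\to0$ using uniform bounds.

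The core of the argument is a uniform a priori estimate for $(\rho^{k+1},u^{k+1})$ in the space \eqref{strong-sol-defi} on a small time interval $[0,T_1]$ independent of $k$. The standard energy estimate gives $\norm{\sqrt{\rho^k}u^{k+1}}_{L^2}+\norm{\nabla u^{k+1}}_{L^2}$; testing the momentum equation against $\pa_t u^{k+1}$ provides the crucial bound on $\norm{\sqrt{\rho^k}\pa_t u^{k+1}}_{L^2}^2+\norm{\nabla u^{k+1}}_{H^1}^2$, but only provided the initial trace $\sqrt{\rho_0}\,\pa_t u^{k+1}(0,\cdot)$ lies in $L^2$. This is precisely what the compatibility condition \eqref{compatibility-condition} delivers: evaluating the momentum equation at $t=0$ gives $\rho_0\pa_t u(0)=-\rho_0^{1/2}g-\rho_0 u_0\cdot\nabla u_0$, so $\sqrt{\rho_0}\pa_t u(0)\in L^2$. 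Differentiating in $t$ and combining with elliptic regularity for the Lam\'e operator $\mu\Delta+(\mu+\lambda)\nabla\div$ then upgrades $u^{k+1}$ to $L^\infty(0,T_1;D_0^1\cap D^2)\cap L^2(0,T_1;D^{2,q_0})$. The main obstacle is exactly to close this bootstrap without ever inverting $\rho^k$: the weighted quantity $\sqrt{\rho^k}\pa_t u^{k+1}$ has to carry the role usually played by $\pa_t u^{k+1}$ in the non-vacuum theory, and interpolation with $\nabla u^{k+1}$ must absorb the bad lower-order terms.

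Convergence of the iteration would follow from a Cauchy estimate in a weaker norm, controlling $\norm{\rho^{k+1}-\rho^k}_{L^2\cap L^{\tilde q}}$ and $\norm{\sqrt{\rho^k}(u^{k+1}-u^k)}_{L^2}+\norm{\nabla(u^{k+1}-u^k)}_{L^2}$ by a Gronwall argument on the linear equations for the differences. The limit $(\rho,u)$ inherits the uniform bounds and hence satisfies \eqref{strong-sol-defi}; uniqueness is proved by the same energy identity applied to the difference of two solutions with identical data. The three boundary conditions \eqref{Cauchy-condition}, \eqref{Dirichlet-condition}, \eqref{Naiver-slip-condition} are handled in parallel, the only case-specific step being the $W^{2,r}$ regularity for the Lam\'e operator; for Navier--slip on a simply connected smooth domain one replaces the Dirichlet version by $\norm{\nabla^2 u}_{L^r}\lesssim\norm{\Delta u}_{L^r}$, valid under $u\cdot n=0$ and $\curl u\times n=0$.

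For the blowup criterion I argue by contradiction. Suppose $T^*<\infty$ but $M:=\sup_{0\le t<T^*}(\norm{\rho}_{H^1\cap W^{1,q_0}}+\norm{u}_{D_0^1})<\infty$. The existence time $T_1$ produced above depends only on the $H^1\cap W^{1,q_0}\times D_0^1$ norm of the data, together with the $L^2$-norm of the compatibility function $g$; both are controlled by $M$ because $\sqrt{\rho}\pa_t u\in L^\infty_t L^2_x$ on $[0,T^*-\delta]$ supplies $g(t_0):=\sqrt{\rho}\pa_t u(t_0)+\sqrt{\rho}\,u\cdot\nabla u(t_0)\in L^2$ at any $t_0<T^*$ with norm bounded in terms of $M$. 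Choosing $t_0$ so close to $T^*$ that $T^*-t_0<\tfrac{1}{2}T_1(M)$ and restarting the local existence theorem from $(\rho(\cdot,t_0),u(\cdot,t_0))$ then extends the strong solution past $T^*$, contradicting the maximality of $T^*$.
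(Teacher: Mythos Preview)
The paper does not prove this theorem; it is quoted verbatim from Cho--Choe--Kim \cite{Cho2004} (and the companion papers \cite{Cho2006-1,Cho2006-2,Choe2003,Salvi1993}) as background, with no argument supplied beyond the citation. So there is no ``paper's own proof'' to compare against. Your sketch is in fact a faithful outline of the Cho--Choe--Kim scheme: linearize and iterate, use the compatibility condition \eqref{compatibility-condition} to initialize $\sqrt{\rho}\,u_t$ in $L^2$, close uniform a~priori bounds on a short interval via the $\pa_t$-energy identity and Lam\'e regularity, and prove Cauchy convergence in a weaker topology.

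One genuine gap in your blowup-criterion paragraph: you assert that $\norm{\sqrt{\rho}\,u_t}_{L^\infty_tL^2_x}$ on $[0,T^*)$ and the $D^2$-norm of $u(t_0)$ are controlled by $M=\sup_{t<T^*}(\norm{\rho}_{H^1\cap W^{1,q_0}}+\norm{u}_{D_0^1})$, but $M$ records only first-order quantities and this implication is not automatic. What is actually needed (and what Cho et al.\ establish) is a \emph{continuation estimate}: along any strong solution, $\norm{\sqrt{\rho}\,\dot u}_{L^\infty L^2}$, $\norm{u}_{L^\infty D^2}$, and hence the compatibility datum $g(t_0)=\sqrt{\rho}\,\dot u(t_0)$ at every interior time, are bounded by a function of $M$ and $T^*$. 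Proving this requires re-running the $\pa_t$-energy estimate and the elliptic regularity \eqref{Lame-estimate} along the solution, not just at $t=0$. Without that step the restart is circular, since local existence at $t_0$ needs $u(t_0)\in D_0^1\cap D^2$ and $g(t_0)\in L^2$, neither of which is part of $M$.
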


Now, we are in a position to state our main results.
\begin{thm}\label{thm-blowup}
  Let $(\rho,u)$ be a strong slolution to the initial boundary value problem \eqref{CNS-eq}\eqref{initial-data} with \eqref{Cauchy-condition} or \eqref{Dirichlet-condition} or \eqref{Naiver-slip-condition} satisfying \eqref{strong-sol-defi}. Assume that the initial data $(\rho_0,u_0)$ satisfies \eqref{initial-data-condition} and \eqref{compatibility-condition}. If $T^*$ is the maximal time of existence, then
  \begin{equation}\label{blowup-criterion1}
    \lim_{T\rightarrow T^*}(\norm{\div u}_{L^1(0,T;L^{\infty})}+\norm{\mathcal{S}(u)}_{L^s(0,T;L^r)})=\infty\textrm{ for \eqref{Cauchy-condition}} \textrm{ or } \eqref{Dirichlet-condition},
  \end{equation}
  with
  \begin{equation}\label{Serrin-index2}
    \frac{2}{s}+\frac{3}{r}\leq2, \quad \frac{3}{2}<r\leq\infty;
  \end{equation}
  and
  \begin{equation}\label{blowup-criterion2}
    \lim_{T\rightarrow T^*}(\norm{\div u}_{L^1(0,T;L^{\infty})}+\norm{\mathcal{S}(u)}_{L^1(0,T;L^{\infty})})=\infty\textrm{ for \eqref{Naiver-slip-condition}} .
  \end{equation}
\end{thm}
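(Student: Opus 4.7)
The plan is to argue by contradiction. Suppose that in \eqref{blowup-criterion1} (resp.\ \eqref{blowup-criterion2}) the quantity remains bounded by some $M_0<\infty$ as $T\uparrow T^*$. I then establish a priori bounds on $\|\rho\|_{H^1\cap W^{1,q_0}}+\|u\|_{D_0^1}$ uniform in $t<T^*$; together with the local existence in Theorem \ref{thm-local-existence}, this allows one to continue the solution past $T^*$, contradicting maximality. A preliminary observation is that $\mathcal S(u)$ is merely the antisymmetric part of $\nabla u$, hence for any $1<p<\infty$ it is equivalent in $L^p$-norm to the vorticity $\omega:=\curl u$, so the Serrin (resp.\ $L^\infty$) condition on $\mathcal S(u)$ translates directly into one on $\omega$.

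The standing assumption $\|\div u\|_{L^1(0,T;L^\infty)}\le M_0$ immediately yields $\|\rho(t)\|_{L^\infty}\le\|\rho_0\|_{L^\infty}e^{M_0}$ by integrating the continuity equation along characteristics, and the standard energy identity gives $\|\sqrt\rho u\|_{L^\infty_t L^2_x}+\|\nabla u\|_{L^2_t L^2_x}\le C$. The heart of the argument is an a priori bound on $\|\sqrt\rho\dot u\|_{L^2}$, where $\dot u:=\pa_t u+u\cdot\nabla u$. Writing $G:=(2\mu+\lambda)\div u-P(\rho)$ for the effective viscous flux, the momentum equation becomes $\nabla G=\rho\dot u+\mu\,\curl\omega$, which yields the elliptic identities $\Delta G=\div(\rho\dot u)$ and $\mu\Delta\omega=\curl(\rho\dot u)$, together with the $L^p$ control
\begin{equation*}
\|\nabla u\|_{L^p}\le C\bigl(\|G\|_{L^p}+\|\omega\|_{L^p}+\|P(\rho)\|_{L^p}\bigr),\qquad 1<p<\infty,
\end{equation*}
valid in each boundary setting via the appropriate div-curl theory. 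Differentiating the momentum equation materially and testing with $\dot u$ produces, after integration by parts,
\begin{equation*}
\frac{d}{dt}\int\rho|\dot u|^2\dx+\int|\nabla\dot u|^2\dx\le C\|\nabla u\|_{L^4}^4+C\|\div u\|_{L^\infty}\bigl(\|\sqrt\rho\dot u\|_{L^2}^2+1\bigr)+\text{l.o.t.}
\end{equation*}

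The closing step is to dominate the super-linear term $\|\nabla u\|_{L^4}^4$ by the dissipation together with integrable weights built from the Serrin norm of $\omega$. Since $\rho$ is now bounded, the elliptic identities above upgrade to $\|\nabla G\|_{L^2}+\|\nabla\omega\|_{L^2}\le C\|\sqrt\rho\dot u\|_{L^2}$, and Sobolev embedding combined with the decomposition $\nabla u\sim G+\omega+P$ gives $\|\nabla u\|_{L^4}\lesssim\|\sqrt\rho\dot u\|_{L^2}^{1/2}\|\nabla u\|_{L^2}^{1/2}+\|\omega\|_{L^4}+C$. Interpolating $\|\omega\|_{L^4}$ between $\|\omega\|_{L^r}$ (provided by the Serrin hypothesis) and $\|\nabla\omega\|_{L^2}$, and applying Young's inequality under \eqref{Serrin-index2}, yields
\begin{equation*}
\|\nabla u\|_{L^4}^4\le\varepsilon\|\nabla\dot u\|_{L^2}^2+C_\varepsilon\,\phi(t)\bigl(\|\sqrt\rho\dot u\|_{L^2}^2+\|\nabla u\|_{L^2}^2+1\bigr),
\end{equation*}
for some $\phi\in L^1(0,T^*)$ built from $\|\omega\|_{L^r}^s$. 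Absorbing the first term into the dissipation and applying Gronwall closes the bound on $\|\sqrt\rho\dot u\|_{L^2}^2+\|\nabla u\|_{L^2}^2$; a standard bootstrap (controlling $\nabla\rho$ by differentiating the mass equation and Sobolev-embedding $\|\nabla u\|_{L^\infty}$ through the elliptic bounds on $G,\omega$) then lifts this to control of $\|\rho\|_{H^1\cap W^{1,q_0}}+\|u\|_{D_0^1}$, completing the contradiction for \eqref{Cauchy-condition} and \eqref{Dirichlet-condition}.

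The main obstacle I anticipate lies in the Navier-slip case \eqref{Naiver-slip-condition}. Although $u\cdot n=0$ and $\curl u\times n=0$ remain compatible with the div-curl framework, the integration-by-parts steps in the $\dot u$-equation produce boundary terms involving the second fundamental form of $\pa\Omega$ that cannot be absorbed using a mere Serrin norm of $\omega$; they genuinely require pointwise-in-space information. Furthermore, recovering $\|\nabla u\|_{L^\infty}$ from $\|\div u\|_{L^\infty}+\|\omega\|_{L^\infty}$ costs a logarithmic factor via a Beale--Kato--Majda-type inequality, whose $\log(e+\|\nabla^2 u\|_{L^p})$ term has to be tamed through a careful Gronwall argument. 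This is precisely why the Navier-slip criterion \eqref{blowup-criterion2} must be stated in the stronger $L^1(0,T;L^\infty)$ norm rather than the Serrin form used in \eqref{blowup-criterion1}.
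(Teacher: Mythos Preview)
Your overall contradiction strategy and the use of $G=(2\mu+\lambda)\div u-P$, $\omega=\curl u$ are right, and the two-level scheme (energy for $\nabla u$, then for $\sqrt\rho\,\dot u$) is essentially what the paper does. However, there is a genuine gap in the Dirichlet case: you invoke the estimate $\|\nabla G\|_{L^2}+\|\nabla\omega\|_{L^2}\le C\|\sqrt\rho\,\dot u\|_{L^2}$ as if it held ``in each boundary setting,'' but under \eqref{Dirichlet-condition} the equation $\Delta G=\div(\rho\dot u)$ carries no usable boundary condition for $G$ (the Dirichlet condition $u=0$ only yields the coupled relation $\nabla G-\mu\,\curl\omega=\rho u_t$ on $\pa\Omega$), so elliptic regularity does not give $\|\nabla G\|_{L^p}\lesssim\|\rho\dot u\|_{L^p}$. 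The paper confronts this directly by abandoning $G$ for Dirichlet and instead decomposing $u=h+g$ with $Lh=\nabla P$, $Lg=\rho\dot u$, both with zero Dirichlet data; this yields $\|\nabla u\|_{L^6}\le C(1+\|\sqrt\rho\,\dot u\|_{L^2})$ and a $BMO$--log estimate $\|\nabla h\|_{L^\infty}\lesssim\|P\|_{L^\infty\cap L^2}\ln(e+\|\nabla^2 h\|_{L^q})$ replacing the missing $G$-estimate. Without this ingredient your closing argument for \eqref{Dirichlet-condition} does not go through.

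Two smaller points. First, the ``standard bootstrap'' to $\nabla\rho$ is not standard: differentiating the mass equation gives $\frac{d}{dt}\|\nabla\rho\|_{L^q}\le C\|\nabla u\|_{L^\infty}\|\nabla\rho\|_{L^q}+C\|\nabla^2 u\|_{L^q}$, and $\|\nabla u\|_{L^\infty}$ is only controlled through a Beale--Kato--Majda inequality with a $\ln(e+\|\nabla^2 u\|_{L^q})$ factor, which in turn involves $\|\nabla\rho\|_{L^q}$; you must close a genuine log-Gronwall loop, and the paper spells this out carefully. Second, for \eqref{Naiver-slip-condition} the paper's resolution goes beyond what you sketch: the estimates on $\|\sqrt\rho\,u_t\|_{L^2}$ and $\|\nabla\rho\|_{L^q}$ are intertwined and do not decouple into a single log-Gronwall; the paper introduces a refined BKM inequality with $\ln\bigl(e+\|\nabla^2 u\|_{L^q}(1+\|(\div u,\curl u)\|_{L^\infty})^{-1}\bigr)$ and an elementary inequality $\beta^2\ln(e+\sigma)\le C\alpha^2\sigma^2+C\beta^2\ln(e+\beta/\alpha)$ to disentangle them. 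Your final paragraph correctly anticipates that pointwise control of $\omega$ is forced here, but the actual closure mechanism is more delicate than indicated.
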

\begin{rem}
  Theorem \ref{thm-blowup} gives a counterpart of Beale-Kato-Majda type criterion in \cite{Beale1984} for the 3D incompressible Euler equations, which states that singularity can develop only if the size of the divergence or the rotation tensor becomes arbitrarily large.
\end{rem}

\begin{rem}
  It is easy to check that
  \begin{equation}\label{Laplace-identity1}
    \Delta u=\nabla\div u-\nabla\times(\curl u),
  \end{equation}
  and
  \begin{equation}\label{Laplace-identity2}
    \Delta u_j=\pa_i(\mathcal{D}(u)_{ij}+\mathcal{D}(u)_{ji})-\pa_j\mathcal{D}(u)_{ii}.
  \end{equation}
  By the standard elliptic estimates, there exists some equivalence relationship among
  \begin{equation*}
    \nabla u\sim\mathcal{D}(u)\sim(\div u,\mathcal{S}(u))
  \end{equation*}
  in the sense of Sobolev spaces. Thus, it is reasonable and natural to bring forward the blowup criterion in Theorem \ref{thm-blowup}, just like that in \cite{Huang2011-1}. Indeed, for the whole space and the bounded domain with Dirichlet boundary condition, \eqref{blowup-criterion1} can also be adjusted as
   \begin{equation*}
    \lim_{T\rightarrow T^*}\left(\norm{\div u}_{L^1(0,T;L^{\infty})}+\norm{\mathring{\mathcal{D}}(u)}_{L^s(0,T;L^r)}\right)=\infty\textrm{ for \eqref{Cauchy-condition} or \eqref{Dirichlet-condition}}
  \end{equation*}
  with $s,\,r$ satisfying \eqref{Serrin-index2}. Here $\mathring{\mathcal{D}}(u)$ denotes the traceless part of $\mathcal{D}(u)$. For the bounded domain with Navier-slip boundary condition \eqref{Naiver-slip-condition}, one can also get
  \begin{equation*}
    \lim_{T\rightarrow T^*}(\norm{\div u}_{L^1(0,T;L^{\infty})}+\norm{\mathring{\mathcal{D}}(u)}_{L^1(0,T;L^{\infty})})=\infty.
  \end{equation*}
\end{rem}

\begin{rem}
  The condition \eqref{Serrin-index2} is similar to the Serrin's blowup criterion \eqref{Serrin-index} in \cite{Huang2011-2}. The difference between \eqref{blowup-criterion1} and \eqref{blowup-criterion2} comes from the boundary conditions. In particular, the Navier-slip boundary condition prevents us deriving energy estimates of $\norm{\nabla^2u}_{L^2}$ or $\nabla\rho$ directly.
\end{rem}

\begin{rem}
  It seems impossible for us to adapt the method of Huang-Li-Xin \cite{Huang2011-1} to give the blowup criterion for Dirichlet problem as \eqref{blowup-criterion1} or \eqref{blowup-criterion2} without any other ingredients. Indeed, the method of Huang-Li-Xin \cite{Huang2011-1} relies heavily upon $L^\infty$-norm of
  $\mathcal{D}(u)$. Particularly, this enables them to get the uniform  estimates of $\|\nabla \rho\|_{L^2}$ and $\|\nabla u\|_{L^2}$ simultaneously, which play an important role in their analysis.
  Compared to the Cauchy problem and Navier-slip boundary condition
  where the effective viscous flux $G=(2\mu+\lambda)\div u-P$ satisfying the first elliptic equation in \eqref{GW-equation} is used to overcome this difficulty, the new difficulty lies in that there is no boundary information on $G$ for the Dirichlet problem , which prevents us deriving the desired estimate of $G$ from the classic elliptic regularity
  theory. The main observation here, motivated by \cite{Sun2011}, we employ the decomposition $u=h+g$ as different parts with $h$ and $g$ respectively subject to $P$ and $\rho\dot{u}$ to overcome this difficulty. In this process, we also observe a key cancellation which enables us to deal with the trouble terms involving $\nabla P$ and simplifies the estimates on $\norm{\nabla u}_{L^2}$ under the conditions \eqref{Cauchy-condition} and \eqref{Dirichlet-condition}.
\end{rem}

\begin{rem}
  This result can also be extended to the viscous heat-conductive flows and other coupled equations of compressible viscous flows.
\end{rem}

Now, let us illustrate the main difficulties encountered in proving Theorem \ref{thm-blowup}
and explain our strategies to overcome them.
To establish the Beale-Kato-Majda type criterion
stated in proving Theorem \ref{thm-blowup}, along with \cite{Huang2011-1,Huang2011-2}, the main difficulty lies in deriving the $L^{\infty}(0,T;L^q)$-norm of $\nabla \rho$.
It should be noted that the methods of \cite{Huang2011-1,Huang2011-2} depend essentially on the $L^1(0,T;L^{\infty})$-norm of $\mathcal{D}(u)$ or the $L^s(0,T;L^r)$-norm of $u$ heavily, which can provide the direct estimate of $\nabla \rho$ or detour the difficulty of that to directly control the $L^{\infty}(0,T;L^2)$-norm of $\nabla u$. It is thus difficult to adapt their analysis here.
To proceed, we need to develop new ingredients in the proof.
The key step in proving Theorem \ref{thm-blowup} is to derive $L^{\infty}(0,T;L^2)$-norm of $\nabla u$.
Compared to \cite{Huang2011-1,Huang2011-2}, a new difficulty here is that $L^{\infty}(0,T;L^2)$-norm of $\nabla \rho$ is not
available. To overcome this difficulty, we make full use of the dissipation estimates of effective viscous flux $G=(2\mu+\lambda)\div u-P$, and
exploit structure of the equations to avoid the appearances of $\nabla\rho$ or $\nabla P$ in the right hand side of the energy inequality.
We refer to the proofs of \eqref{estimate1-3} and \eqref{estimate1-4} for details.
However, this method is invalid for the Dirichlet problem since
there is no boundary information on $G$ , which prevents us deriving the desired estimate of $G$ from the classic elliptic regularity
  theory as the Cauchy problem and  Navier-slip boundary
condition. Therefore, we must pursue another route by resorting to the velocity decomposition introduced by \cite{Sun2011} to derive $L^{\infty}(0,T;L^2)$-norm of $\nabla u$. In this process, we also observe a key cancellation which enables us to deal with the trouble terms involving $\nabla P$ and simplifies the estimates on $L^{\infty}(0,T;L^2)$-norm of $\nabla u$ under the conditions \eqref{Cauchy-condition} and \eqref{Dirichlet-condition}.
 Then, for the Cauchy problem and Dirichlet problem, we can employ the Beale-Kato-Majda type inequality developed by \cite{Huang-Li2022}
 to get $L^{\infty}(0,T;L^q)$-norm of $\nabla \rho$ by using the $\log$-type Gronwall's argument.
 However, the case of bounded domain with Navier-slip boundary condition is very different due to the boundary effects.
 In particular, the estimates on $\|\sqrt{\rho}u_t\|_{L^2}$ and $\nabla\rho$ cross each other to some extent and cannot directly form the $\log$-type Gronwall's inequality, which requires us to employ new methods. To do this, we first modify the Beale-Kato-Majda's estimate of $\norm{\nabla u}_{L^{\infty}}$ in \cite{Cai-Li2023}. Then, we employ a crucial inequality from \cite{Foias1988}
 to rephrase the energy inequality of $\norm{\nabla\rho}_{L^q}$ to the usual $\log$-type Gronwall's inequality. Thus, this together with the energy inequality on $\|\sqrt{\rho}u_t\|_{L^2}$, we can close the desired a priori estimate.
 We refer to the proofs of Lemma \ref{lem-energy-2-whole2} and Lemma \ref{lem-energy-2-domain} for details.

\section{Proof of Theorem \ref{thm-blowup}}\label{sect-proof-theorem}
Let $(\rho,u)$ be a strong solution to the problem \eqref{CNS-eq}\eqref{initial-data} in Theorem \ref{thm-blowup}. The standard energy estimate on \eqref{CNS-eq} yields that
\begin{equation}\label{energy-0}
  \sup_{0\leq t\leq T}(\norm{\sqrt{\rho}u(t)}_{L^2}^2+\norm{\rho}_{L^{\gamma}}^{\gamma}+\norm{\rho}_{L^1})+\int_0^T\norm{\nabla u}_{L^2}^2dt\leq C_0,\quad 0\leq T<T^*.
\end{equation}
Here we have applied the elliptic estimate on \eqref{Laplace-identity1} for the case \eqref{Naiver-slip-condition}(also see \cite{von-Wahl1992}):
\begin{equation}
  \norm{\nabla u}_{L^2}\lesssim\norm{\div u}_{L^2}+\norm{\curl u}_{L^2}.
\end{equation}
Here and after $\lesssim$ denotes that $a\lesssim b$ means $a\lesssim Cb$ for some generic constant $C>0$.

To prove the theorem, throughout this section, we assume otherwise  that
\begin{equation}\label{assume1}
  \lim_{T\rightarrow T^*}(\norm{\div u}_{L^1(0,T;L^{\infty})}+\norm{\mathcal{S}(u)}_{L^s(0,T;L^r)})\leq M_0\textrm{ for \eqref{Cauchy-condition} or \eqref{Dirichlet-condition}},
\end{equation}
or
\begin{equation}\label{assume2}
    \lim_{T\rightarrow T^*}(\norm{\div u}_{L^1(0,T;L^{\infty})}+\norm{\mathcal{S}(u)}_{L^1(0,T;L^{\infty})})\leq M_0\textrm{ for \eqref{Naiver-slip-condition}}.
\end{equation}
Then applying this assumption to $\eqref{CNS-eq}_1$ yields the $L^{\infty}$ bound of density $\rho$ as follows. Here we omit the proof.

\begin{lem}
  Suppose that
  \begin{equation*}
    \int_0^T\norm{\div u}_{L^{\infty}}dt\leq C, \quad 0\leq T<T^*.
  \end{equation*}
  Then
  \begin{equation}\label{density-bound}
    \sup_{0\leq t\leq T}\norm{\rho}_{L^{\infty}}\leq C, \quad 0\leq T<T^*.
  \end{equation}
  Here and below, $C$ will denote a generic constant depending only on $C_0$, $M_0$, $T$, the initial data and domain $\Omega$.
\end{lem}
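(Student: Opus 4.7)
The plan is to exploit the transport structure of $\eqref{CNS-eq}_1$, which in nonconservative form reads $\pa_t\rho + u\cdot\nabla\rho = -\rho\,\div u$, and to run a classical $L^p$ energy estimate that one then sends to $p=\infty$. This is the standard maximum-principle argument for continuity equations with divergence-controlled drift.

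First, I would multiply $\eqref{CNS-eq}_1$ by $p\rho^{p-1}$ for $p\geq 2$ and integrate over $\Omega$. Using the chain rule $p\rho^{p-1}\pa_t\rho = \pa_t\rho^p$ and $p\rho^{p-1}u\cdot\nabla\rho = u\cdot\nabla\rho^p = \div(\rho^p u) - \rho^p\div u$, one obtains
\begin{equation*}
\frac{d}{dt}\int\rho^p\,dx + \int\div(\rho^p u)\,dx + (p-1)\int \rho^p\,\div u\,dx = 0.
\end{equation*}
The divergence term vanishes in each of the three geometric settings: by decay at infinity under \eqref{Cauchy-condition}, by $u=0$ on $\pa\Omega$ under \eqref{Dirichlet-condition}, and by $u\cdot n=0$ on $\pa\Omega$ under \eqref{Naiver-slip-condition}. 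This leaves the differential inequality
\begin{equation*}
\frac{d}{dt}\norm{\rho}_{L^p}^p \leq (p-1)\norm{\div u}_{L^\infty}\norm{\rho}_{L^p}^p.
\end{equation*}

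Gronwall's inequality, together with the hypothesis, then yields
\begin{equation*}
\norm{\rho(t)}_{L^p} \leq \norm{\rho_0}_{L^p}\exp\lb\tfrac{p-1}{p}\int_0^t\norm{\div u}_{L^\infty}\,d\tau\rb \leq \norm{\rho_0}_{L^p}\,e^C.
\end{equation*}
Since $\tilde q>3$ in \eqref{initial-data-condition}, Sobolev embedding gives $\rho_0\in W^{1,\tilde q}\hookrightarrow L^\infty$; combined with $\rho_0\in L^1$ (and, for the Cauchy problem, interpolation between $L^1$ and $L^\infty$), we have $\rho_0\in L^p$ for all $p\in[1,\infty]$ with $\norm{\rho_0}_{L^p}\to\norm{\rho_0}_{L^\infty}$ as $p\to\infty$. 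The local existence result (Theorem \ref{thm-local-existence}) guarantees $\rho(t)\in W^{1,q_0}\hookrightarrow L^\infty$ for each $t$, so passing $p\to\infty$ is legitimate and produces
\begin{equation*}
\sup_{0\leq t\leq T}\norm{\rho(t)}_{L^\infty} \leq \norm{\rho_0}_{L^\infty}\,e^C \leq C,
\end{equation*}
which is exactly \eqref{density-bound}.

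The main (minor) obstacle is purely technical: one must justify the integration by parts and the limit $p\to\infty$. Both are legitimized by the strong-solution regularity $\rho\in C([0,T];W^{1,q_0})$ and $u\in C([0,T];D_0^1\cap D^2)$ stated in \eqref{strong-sol-defi}, together with the respective boundary conditions which ensure the boundary flux $\int_{\pa\Omega}\rho^p u\cdot n\,dS$ vanishes. This is presumably why the authors choose to omit the proof.
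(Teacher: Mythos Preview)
Your proof is correct. The paper explicitly omits the proof of this lemma (``Here we omit the proof''), so there is no argument to compare against; your $L^p$ energy estimate followed by the passage $p\to\infty$ is a standard and valid way to obtain \eqref{density-bound}, and your handling of the boundary term and the justification of the limit via the regularity in \eqref{strong-sol-defi} are both appropriate.
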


Before estimating $\nabla u$, we introduce some notations as follows:
\begin{equation*}
  G:=(2\mu+\lambda)\div u-P, \quad \omega:=\nabla\times u, \quad \dot{f}:=f_t+u\cdot\nabla f
\end{equation*}
which represent the effective viscous flux, vorticity and material derivative of $f$, respectively.
From $\eqref{CNS-eq}_2$, it is easy to check that
\begin{equation}\label{GW-equation}
  \Delta G=\div(\rho\dot{u}),\quad \mu\Delta\omega=\nabla\times(\rho\dot{u}).
\end{equation}
Then the classical elliptic estimates give the following.
\begin{lem}
  Let $\Omega=\mathbb{R}^3$ and $p\in(1,\infty)$. Then
  \begin{equation}\label{GW-estimate-whole}
    \norm{\nabla G}_{L^p}+\norm{\nabla\omega}_{L^p}\lesssim\norm{\rho\dot{u}}_{L^p},\quad \norm{\nabla u}_{L^p}\lesssim\norm{\div u}_{L^p}+\norm{\curl u}_{L^p}.
  \end{equation}
\end{lem}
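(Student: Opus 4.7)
The lemma consists of two standard $L^p$ estimates on $\mathbb{R}^3$. My approach is to derive both from Calder\'on--Zygmund theory by representing the gradients of interest as second-order Riesz transforms of the right-hand sides.

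For the first estimate, I would invert the Laplacian in \eqref{GW-equation}. Since $G=(2\mu+\lambda)\div u-P\in L^2$ (using $u\in D_0^1\subset L^6$, so $\div u\in L^2$, together with the bound \eqref{density-bound} on $\rho$ which gives $P\in L^2$), we may write
\[
\partial_i G = R_iR_j(\rho\dot u_j),\qquad \mu\,\partial_i\omega_k = \varepsilon_{k\ell m}R_iR_\ell(\rho\dot u)_m,
\]
where $R_i=\partial_i(-\Delta)^{-1/2}$ are the Riesz transforms. Since each of these is a composition of two Riesz transforms, the Mikhlin multiplier theorem (equivalently, the classical Calder\'on--Zygmund theorem) yields $\|\nabla G\|_{L^p}+\|\nabla\omega\|_{L^p}\lesssim\|\rho\dot u\|_{L^p}$ for every $1<p<\infty$.

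For the second estimate, I would use the identity \eqref{Laplace-identity1}, $\Delta u=\nabla\div u-\nabla\times\curl u$, again inverting the Laplacian (legitimate since $u\in D_0^1$ vanishes at infinity under \eqref{Cauchy-condition}) to obtain
\[
\partial_i u_k = R_iR_k(\div u) - \varepsilon_{k\ell m}R_iR_\ell(\curl u)_m.
\]
Thus $\nabla u$ is a Riesz-type singular integral of $(\div u,\curl u)$, and another application of Calder\'on--Zygmund gives the bound $\|\nabla u\|_{L^p}\lesssim\|\div u\|_{L^p}+\|\curl u\|_{L^p}$.

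The only subtle point, which I would regard as the main (mild) obstacle, is justifying the representations above as genuine distributional inversions of the Laplacian on $\mathbb{R}^3$ rather than formal Fourier manipulations: one needs the scalar fields $G,\omega$, and the components of $u$ to have enough decay/integrability so that $\Delta^{-1}$ applied to a divergence or curl lands in $L^p$. This is ensured by the strong-solution class \eqref{strong-sol-defi} combined with the Cauchy condition \eqref{Cauchy-condition}, so the argument is essentially routine.
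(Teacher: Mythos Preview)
Your proposal is correct and is precisely the standard justification the paper has in mind when it writes ``the classical elliptic estimates give the following'' without supplying a proof: on $\mathbb{R}^3$ the equations \eqref{GW-equation} and the identity \eqref{Laplace-identity1} invert to second-order Riesz transforms, and Calder\'on--Zygmund theory yields both bounds in \eqref{GW-estimate-whole}. The minor sign discrepancies in your Riesz-transform formulas (e.g.\ $\partial_iG=-R_iR_j(\rho\dot u_j)$ rather than $+R_iR_j$) are immaterial for the norm estimates.
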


\begin{lem}
  Let $\Omega$ be a simply connected bounded smooth domain with boundary condition \eqref{Naiver-slip-condition} and $p\in(1,\infty)$. Then
  \begin{equation}\label{G-estimate-domain}
    \norm{\nabla G}_{L^p}+\norm{\nabla\omega}_{L^p}\lesssim\norm{\rho\dot{u}}_{L^p}.
  \end{equation}
\end{lem}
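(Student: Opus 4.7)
The plan is to treat $G$ and $\omega$ as solutions of elliptic boundary value problems on $\Omega$ and invoke classical $L^p$ regularity, with the Navier-slip conditions supplying the boundary data. The starting point is the identity $\nabla G - \mu\nabla\times\omega = \rho\dot{u}$ obtained from $\eqref{CNS-eq}_2$ via $\Delta u = \nabla\div u - \nabla\times\omega$, together with the structural equations in \eqref{GW-equation}: $\Delta G = \div(\rho\dot{u})$ and $\mu\Delta\omega = \nabla\times(\rho\dot{u})$.

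For the vorticity $\omega$, I would use that $\omega = \curl u$ is automatically divergence-free, that $\omega\times n=0$ on $\partial\Omega$ by \eqref{Naiver-slip-condition}, and that $\mu\Delta\omega = \nabla\times(\rho\dot{u})$. The standard $W^{1,p}$ estimate for div-curl systems on simply-connected smooth domains with the tangential-vanishing boundary condition (as in the estimate already cited for \eqref{Laplace-identity1}) yields
\begin{equation*}
  \norm{\nabla\omega}_{L^p}\lesssim\norm{\div\omega}_{L^p}+\norm{\curl\omega}_{L^p}+\norm{\omega}_{L^p}.
\end{equation*}
Since $\div\omega=0$, only $\norm{\curl\omega}_{L^p}$ must be bounded. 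Combining $\mu\Delta\omega = \nabla\times(\rho\dot{u})$ with $\div\omega=0$ gives $-\mu\curl\curl\omega = \nabla\times(\rho\dot{u})$, so the Helmholtz/Hodge decomposition of $\rho\dot{u}$ in $L^p(\Omega)$ shows that the solenoidal part has a $W^{1,p}$-potential $\mu\curl\omega$ with norm controlled by $\norm{\rho\dot{u}}_{L^p}$. The residual $\norm{\omega}_{L^p}$ is handled by Sobolev interpolation against the energy bound \eqref{energy-0}, or absorbed through the usual contradiction/closure argument for elliptic estimates.

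For the effective viscous flux $G$, the natural problem is the Neumann problem $\Delta G = \div(\rho\dot{u})$ with boundary value of $\nabla G\cdot n$ read off the momentum equation, namely
\begin{equation*}
  \nabla G\cdot n\big|_{\partial\Omega}=\rho\dot{u}\cdot n+\mu(\nabla\times\omega)\cdot n.
\end{equation*}
The key boundary-calculus step is that the condition $\omega\times n=0$ on $\partial\Omega$ forces the tangential part of $\omega$ to vanish there, and the surface identity
\begin{equation*}
  n\cdot(\nabla\times\omega)=\div_{\tau}(\omega\times n)+(\text{curvature of }\partial\Omega)\cdot\omega
\end{equation*}
collapses the right-hand side to a lower-order boundary term in $\omega$. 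A parallel computation for $\rho\dot{u}\cdot n$ on $\partial\Omega$: since $u\cdot n=0$ propagates to $u_t\cdot n=0$ and $(u\cdot\nabla u)\cdot n=-(u\otimes u):\nabla n$ via differentiating $u\cdot n=0$ tangentially, we see that $\rho\dot{u}\cdot n$ on $\partial\Omega$ involves only $u$ and curvature. Applying the classical $L^p$ regularity for the Neumann problem on a bounded smooth domain (compatibility $\int_\Omega\Delta G = \int_{\partial\Omega}\nabla G\cdot n$ being verified by direct computation using $\int\div u\,dx = 0$) gives $\norm{\nabla G}_{L^p}\lesssim\norm{\rho\dot{u}}_{L^p}+\text{lower order}$, with the lower-order terms controlled by the vorticity bound already established and by \eqref{energy-0}.

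The main obstacle is this boundary analysis for $G$: unlike in the whole-space case handled by \eqref{GW-estimate-whole}, which is pure Calderón--Zygmund/Riesz-transform theory, the Neumann data here are nonzero and couple the curvature of $\partial\Omega$ with $\omega$ and $u$. One must verify that these boundary contributions are genuinely of lower order relative to $\norm{\rho\dot{u}}_{L^p}$ and, in doing so, avoid circularity between the bounds on $G$ and on $\omega$—the $G$-estimate requires knowing $\omega$ on $\partial\Omega$, while the $\omega$-estimate uses the structural identity $\nabla G - \mu\nabla\times\omega = \rho\dot{u}$. Decoupling the two via the divergence-free property of $\omega$ and treating the residual boundary terms by trace inequalities is the delicate point.
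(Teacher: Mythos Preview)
Your general strategy---treat $G$ and $\omega$ as solutions to elliptic boundary value problems---is right, but you miss the one observation that makes the argument short and eliminates the circularity you worry about.

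The crucial fact is that $(\nabla\times\omega)\cdot n = 0$ \emph{exactly} on $\pa\Omega$, not merely up to curvature terms. This follows directly from $\omega\times n = 0$: in a local frame with $n = e_3$, the normal component $(\curl\omega)_3 = \pa_1\omega_2 - \pa_2\omega_1$ involves only tangential derivatives of the tangential components of $\omega$, all of which vanish on $\pa\Omega$. (The paper cites \cite{Bendali1985} for this.) Hence the Neumann data for $G$ is precisely $\pa_n G = \rho\dot{u}\cdot n$, and the pair $\Delta G = \div(\rho\dot{u})$, $\pa_n G = \rho\dot{u}\cdot n$ is the natural weak Neumann problem $\int\nabla G\cdot\nabla\phi = \int\rho\dot{u}\cdot\nabla\phi$, for which $L^p$ elliptic theory gives $\norm{\nabla G}_{L^p}\lesssim\norm{\rho\dot{u}}_{L^p}$ directly---no further analysis of $\rho\dot{u}\cdot n$ is needed.

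With $G$ handled first, $\omega$ follows in one line: the div-curl estimate on a simply connected domain with $\omega\times n = 0$ reads $\norm{\nabla\omega}_{L^p}\lesssim\norm{\curl\omega}_{L^p}$ (no lower-order $\norm{\omega}_{L^p}$ term---simple connectivity kills it), and $\mu\curl\omega = \nabla G - \rho\dot{u}$ from the momentum equation finishes the bound. Your order---attempting $\omega$ first via a vague Helmholtz-decomposition argument for $\norm{\curl\omega}_{L^p}$---is what creates the circularity you yourself flag, and the lower-order terms you introduce (both $\norm{\omega}_{L^p}$ and the boundary ``curvature'' remnants) are not permitted by the clean form of the lemma. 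Reversing the order and using the exact vanishing of $(\nabla\times\omega)\cdot n$ dissolves both obstacles.
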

\begin{proof}
  Since $G$ satisfies
  \begin{equation*}
    \begin{cases}
      \Delta G=\div(\rho\dot{u}),\\
      \nabla G\cdot n=\rho\dot{u}\cdot n \quad\mbox{on }\pa\Omega,
    \end{cases}
  \end{equation*}
  where $\nabla\times\omega\cdot n=0$ on $\pa\Omega$ from \cite{Bendali1985}, one gets that for any integer $k\geq 0$ and $p\in(1,\infty)$,
  \begin{equation*}
    \norm{G}_{W^{k+1,p}}\lesssim\norm{\rho\dot{u}}_{W^{k,p}}.
  \end{equation*}
  Then from \eqref{W-estimate1},
  \begin{equation*}
    \norm{\nabla\omega}_{L^p}\lesssim\norm{\nabla\times\omega}_{L^p}
    \lesssim\norm{\nabla G-\rho\dot{u}}_{L^p}\lesssim\norm{\rho\dot{u}}_{L^p}.
  \end{equation*}
\end{proof}

The following lemma comes from \cite{Aramaki2014}.
\begin{lem}
  Let $k\geq 0$ be integer and $p\in(1,\infty)$, and assume $\Omega$ is a simply connected bounded domain in $\mathbb{R}^3$ with $C^{k+1,1}$ boundary $\pa\Omega$. If $v\cdot n=0$ or $v\times n=0$ on $\pa\Omega$, there exists a constant $C(q,k,\Omega)$ such that
  \begin{equation*}
    \norm{v}_{W^{k+1,p}}\leq C(\norm{\div v}_{W^{k,p}}+\norm{\curl v}_{W^{k,p}}).
  \end{equation*}
  In particular, for $k=0$, we have
  \begin{equation}\label{Hodge-decomposition}
    \norm{\nabla v}_{L^p}\leq C(\norm{\div v}_{L^p}+\norm{\curl v}_{L^p}).
  \end{equation}
\end{lem}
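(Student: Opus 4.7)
The plan is to reduce the inequality to two classical elliptic estimates via a Hodge-type decomposition adapted to the boundary condition on $v$. I would write $v=\nabla\varphi+w$, where $\varphi$ is a scalar potential chosen so that $w$ becomes divergence free while preserving the boundary trace of $v$; the estimate then splits into a bound on $\nabla\varphi$ in terms of $\div v$ and a bound on the divergence-free field $w$ in terms of $\curl v$.

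For the first step, in the case $v\cdot n=0$ on $\pa\Omega$ I would let $\varphi$ solve the Neumann problem $\Delta\varphi=\div v$ in $\Omega$, $\pa_n\varphi=0$ on $\pa\Omega$, noting that the compatibility condition $\int_\Omega\div v\,dx=\int_{\pa\Omega}v\cdot n\,dS=0$ is automatic. In the case $v\times n=0$, the natural choice is instead the Dirichlet problem $\Delta\varphi=\div v$, $\varphi=0$ on $\pa\Omega$. Standard Agmon--Douglis--Nirenberg elliptic regularity on a $C^{k+1,1}$ domain then yields $\norm{\nabla\varphi}_{W^{k+1,p}}\lesssim\norm{\div v}_{W^{k,p}}$. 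With this choice, $w:=v-\nabla\varphi$ satisfies $\div w=0$, $\curl w=\curl v$, and inherits the boundary condition of $v$.

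For the second step I would recover $w$ as $w=\curl\Psi$ for a vector potential $\Psi$ satisfying the gauge $\div\Psi=0$ together with an auxiliary boundary condition dual to that of $v$; since $\Omega$ is simply connected, such a $\Psi$ exists and is unique up to constants. The identity $-\Delta\Psi=\curl w=\curl v$ then gives, by elliptic regularity and the Biot--Savart type bound $\norm{w}_{W^{k+1,p}}\lesssim\norm{\Psi}_{W^{k+2,p}}$, the estimate $\norm{w}_{W^{k+1,p}}\lesssim\norm{\curl v}_{W^{k,p}}$. Summing the two bounds yields the claim; the $k=0$ case is an immediate specialization.

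The hard part I expect is the justification of this vector potential representation and the elimination of harmonic fields. More precisely, one must verify that the space $\{h:\div h=0,\ \curl h=0,\ h\cdot n=0\text{ on }\pa\Omega\}$ (respectively $\{h:\div h=0,\ \curl h=0,\ h\times n=0\text{ on }\pa\Omega\}$) is trivial on $\Omega$; this is the topological content of the lemma and is precisely where the simple connectedness of $\Omega$ (together with the connectedness of $\pa\Omega$, which is tacit here) enters, via Lefschetz duality identifying these spaces with the first or second relative homology of $\Omega$. Once this topological obstruction is dispatched and the Poincar\'e-type lifting of a divergence-free field to a vector potential is established with matching boundary behavior, the remainder is routine scalar elliptic theory, and the constant depends only on $p$, $k$, and $\Omega$ as claimed.
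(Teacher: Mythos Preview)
The paper does not supply its own proof of this lemma; it simply quotes the result from Aramaki's $L^p$ div--curl theory \cite{Aramaki2014} and moves on. Your proposal therefore goes well beyond what the paper does: you outline an honest proof via a Hodge-type splitting $v=\nabla\varphi+w$, with $\varphi$ solving the Neumann (respectively Dirichlet) Laplace problem matched to the boundary condition $v\cdot n=0$ (respectively $v\times n=0$), followed by a vector potential recovery $w=\curl\Psi$ for the divergence-free remainder. This is the standard route to such estimates and is essentially the strategy behind the cited reference; your identification of the topological step --- vanishing of the tangent or normal harmonic fields under simple connectedness (and, as you correctly flag, connectedness of $\pa\Omega$ for the $v\times n=0$ case) --- is exactly the crux.

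One point to tighten if you carry this out in full: the boundary condition for the vector potential $\Psi$ and the resulting elliptic estimate $\norm{\Psi}_{W^{k+2,p}}\lesssim\norm{\curl v}_{W^{k,p}}$ need to be specified carefully, since the system $-\Delta\Psi=\curl v$, $\div\Psi=0$ with a tangential or normal trace condition on $\Psi$ is not a standard scalar problem and requires either a component-wise reduction or an appeal to the vector Laplacian theory (as in Amrouche--Bernardi--Dauge--Girault or indeed Aramaki). But at the level of a sketch your argument is sound and more informative than the paper's bare citation.
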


Then we get the estimate on $\nabla u$ in the following lemma.
\subsection{Estimates on $\nabla u$ for the case \eqref{Cauchy-condition}  or \eqref{Dirichlet-condition} or \eqref{Naiver-slip-condition}.}
\begin{lem}\label{lem-energy-1}
Under the condition \eqref{Cauchy-condition} or \eqref{Naiver-slip-condition}, it holds that for any $T<T^*$,
\begin{equation}\label{energy-1}
   \sup_{0\leq t\leq T}\norm{\nabla u}_{L^2}^2+\int_0^T(\norm{\sqrt{\rho}\dot{u}}_{L^2}^2+\norm{(\nabla G,\nabla\omega)}_{L^2}^2)dt\leq C.
\end{equation}
Under the condition \eqref{Dirichlet-condition}, it holds that for any $T<T^*$,
\begin{equation}\label{energy-1-D}
   \sup_{0\leq t\leq T}\norm{\nabla u}_{L^2}^2+\int_0^T\norm{\sqrt{\rho}\dot{u}}_{L^2}^2dt\leq C.
\end{equation}
\end{lem}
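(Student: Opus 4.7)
My plan is to test the momentum equation in $\eqref{CNS-eq}_2$ against the material derivative $\dot u$ and integrate over $\Omega$. After standard integration by parts on the viscous and pressure terms, and using $\dot P=-\gamma P\div u$ (from continuity) to eliminate $P_t$, a direct computation produces an identity of the form
\begin{equation*}
\frac{d}{dt}\int\Bigl[\tfrac{\mu}{2}|\nabla u|^2+\tfrac{\mu+\lambda}{2}(\div u)^2-P\div u\Bigr]dx + \int\rho|\dot u|^2\,dx = \mathcal R(t),
\end{equation*}
where $\mathcal R(t)$ consists of cubic expressions in $\nabla u$ together with a benign $(\gamma-1)\int P(\div u)^2$ piece. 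Using the antisymmetry of $\mathcal S(u)$ against the symmetric tensor $\partial_j u_i\,\partial_k u_i$, the worst cubic term collapses to $\int\mathcal D(u)_{jk}\,\partial_j u_i\,\partial_k u_i\,dx$. Boundary contributions vanish for \eqref{Cauchy-condition} and \eqref{Dirichlet-condition} (since $\dot u|_{\partial\Omega}=0$ there), and in the Navier-slip case \eqref{Naiver-slip-condition} they reduce to curvature-weighted boundary integrals of $|u|^2$ absorbed by a trace estimate.

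\textbf{Cases \eqref{Cauchy-condition} and \eqref{Naiver-slip-condition}.} Splitting $\mathcal D(u)=\tfrac13(\div u)I+\mathring{\mathcal D}(u)$, the trace part of $\mathcal R(t)$ is controlled by $\|\div u\|_{L^\infty}\|\nabla u\|_{L^2}^2$, while for the traceless part H\"older gives
\begin{equation*}
\int|\mathring{\mathcal D}(u)|\,|\nabla u|^2\,dx\lesssim \|\mathcal S(u)\|_{L^r}\|\nabla u\|_{L^{2r/(r-1)}}^2,
\end{equation*}
after exchanging $\mathring{\mathcal D}(u)$ for $\mathcal S(u)$ in $L^r$ through the div-curl equivalence \eqref{GW-estimate-whole}/\eqref{Hodge-decomposition}. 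The key input is the $L^6$-bound
\begin{equation*}
\|\nabla u\|_{L^6}\lesssim \|G\|_{L^6}+\|P\|_{L^6}+\|\omega\|_{L^6}\lesssim \|\sqrt\rho\,\dot u\|_{L^2}+\|\nabla u\|_{L^2}+1,
\end{equation*}
coming from Sobolev embedding, the elliptic estimates \eqref{GW-estimate-whole}/\eqref{G-estimate-domain}, and the bound $\|P\|_{L^6}\leq C$ supplied by \eqref{density-bound}. Interpolating $\|\nabla u\|_{L^{2r/(r-1)}}$ between $L^2$ and $L^6$ and applying Young, I find that the Serrin exponent $s=2r/(2r-3)$ of \eqref{Serrin-index2} is exactly what is needed for $\|\mathcal S(u)\|_{L^r}^s$ to appear as the Gr\"onwall coefficient while a fraction of $\|\sqrt\rho\dot u\|_{L^2}^2$ is absorbed on the left. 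With \eqref{assume1}/\eqref{assume2} giving the integrability of $\|\mathcal S(u)\|_{L^r}^s+\|\div u\|_{L^\infty}$, Gr\"onwall delivers the bound $\sup_t\|\nabla u\|_{L^2}^2+\int_0^T\|\sqrt\rho\dot u\|_{L^2}^2\leq C$; the corresponding bound on $\int_0^T\bigl(\|\nabla G\|_{L^2}^2+\|\nabla\omega\|_{L^2}^2\bigr)$ is then immediate from \eqref{GW-estimate-whole}/\eqref{G-estimate-domain}.

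\textbf{Dirichlet case.} The preceding route collapses because the effective viscous flux carries no usable boundary datum, so $\|\nabla G\|_{L^p}$ and hence the $L^6$-estimate on $\nabla u$ are unavailable by this means. Following \cite{Sun2011}, I split $u=h+g$, where
\begin{equation*}
\mu\Delta h+(\mu+\lambda)\nabla\div h=\nabla P,\quad\mu\Delta g+(\mu+\lambda)\nabla\div g=\rho\dot u,\qquad h|_{\partial\Omega}=g|_{\partial\Omega}=0.
\end{equation*}
Standard elliptic regularity yields $\|\nabla h\|_{L^2}+\|h\|_{L^6}\lesssim\|P\|_{L^2}\lesssim 1$ and $\|\nabla^2 g\|_{L^2}+\|\nabla g\|_{L^6}\lesssim\|\sqrt\rho\,\dot u\|_{L^2}$. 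Substituting this decomposition into the basic energy identity is intended to uncover a crucial cancellation: the $\int\nabla P\cdot\dot u$ contribution generated by integrating against the $h$-part exactly matches the pressure term already present in $\eqref{CNS-eq}_2$, so the inaccessible quantity $\|\nabla P\|_{L^2}$ (equivalently $\|\nabla\rho\|_{L^2}$) never appears on the right. What remains is controlled by $\|\nabla g\|_{L^6}$ together with the Serrin-type hypothesis on $\mathcal S(u)$, and Gr\"onwall delivers \eqref{energy-1-D}. The hard part throughout the lemma is precisely this Dirichlet setting: sidestepping the simultaneous loss of boundary information for $G$ \emph{and} of $\|\nabla\rho\|_{L^2}$ depends entirely on the decomposition $u=h+g$ and on spotting the $\nabla P$-cancellation, which is the essential new ingredient here.
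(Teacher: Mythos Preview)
Your overall strategy---test $\eqref{CNS-eq}_2$ against $\dot u$, isolate a cubic-in-$\nabla u$ remainder, feed in $\|\nabla u\|_{L^6}\lesssim 1+\|\sqrt\rho\,\dot u\|_{L^2}$, and close by Gr\"onwall---matches the paper. But there is a genuine gap in your treatment of the Cauchy problem: the step ``exchanging $\mathring{\mathcal D}(u)$ for $\mathcal S(u)$ in $L^r$ through the div-curl equivalence'' does not go through. The estimate \eqref{GW-estimate-whole} reads $\|\nabla u\|_{L^r}\lesssim\|\div u\|_{L^r}+\|\curl u\|_{L^r}$, so passing from $\mathring{\mathcal D}(u)$ to $\mathcal S(u)\sim\curl u$ costs an unavoidable $\|\div u\|_{L^r}$. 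On a bounded domain that is harmless since $\|\div u\|_{L^r}\le C_\Omega\|\div u\|_{L^\infty}$, but on $\mathbb R^3$ no such embedding exists, and \eqref{assume1} supplies only $\|\div u\|_{L^1(0,T;L^\infty)}$, not $\|\div u\|_{L^s(0,T;L^r)}$. If you try to interpolate $\|\div u\|_{L^r}$ between $L^2$ and $L^\infty$, the Gr\"onwall coefficient picks up a power of $\|\nabla u\|_{L^2}$ strictly larger than $2$ for every finite $r>3/2$, and the loop fails to close. (Pointwise, $\mathring{\mathcal D}(u)$ and $\mathcal S(u)$ are genuinely inequivalent: a potential flow $u=\nabla\phi$ has $\mathcal S(u)=0$ while $\mathring{\mathcal D}(u)\neq0$.)

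The paper avoids this by rewriting $\Delta u=\nabla\div u-\nabla\times\curl u$ \emph{before} integrating by parts, so the energy is $\frac{d}{dt}\int(\frac{\mu}{2}|\curl u|^2+\frac{2\mu+\lambda}{2}(\div u)^2)$ and the convective remainder splits as $-\mu\int u\cdot\nabla u\cdot\nabla\times\curl u+\int u\cdot\nabla u\cdot\nabla G$. A short vector-identity computation turns the first integral into $\frac12\int|\curl u|^2\div u-\int\curl u\cdot\nabla u\cdot\curl u$, which is directly $\lesssim\|\curl u\|_{L^r}\|\nabla u\|_{L^{2r/(r-1)}}^2$: the quantity $\mathcal S(u)$ appears natively, with no $\|\div u\|_{L^r}$ pollutant. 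For the Dirichlet case your outline is close in spirit, but the actual cancellation in the paper is the elementary one in \eqref{estimate1-3-D}: after expanding $\int u\cdot\nabla u\cdot\nabla G-\int u_t\cdot\nabla P$ the two copies of $\int\div(Pu)\div u$ cancel outright. The decomposition $u=h+g$ from \cite{Sun2011} is invoked only to supply the surrogate bound \eqref{u-estimate-L6} in place of the unavailable $G$-estimate, not to produce the $\nabla P$-cancellation you describe.
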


\begin{proof}[Proof of Lemma \ref{lem-energy-1}]
  Multiplying $\rho^{-1}(\mu\Delta u+(\mu+\lambda)\nabla\div u-\nabla P)$ on $\eqref{CNS-eq}_2$ and integrating over $\Omega$, we have
  \begin{equation}\label{estimate1-0}
  \begin{aligned}
    &\quad\frac{d}{dt}\int(\frac{\mu}{2}|\curl u|^2+\frac{2\mu+\lambda}{2}(\div u)^2)dx+\int\rho^{-1}(\mu\Delta u+(\mu+\lambda)\nabla\div u-\nabla P)^2dx\\
    &=-\mu\int u\cdot\nabla u\cdot\nabla\times\curl udx+\int u\cdot\nabla u\cdot\nabla Gdx-\int u_t\cdot\nabla Pdx
  \end{aligned}
  \end{equation}
  due to $\Delta u=\nabla\div u-\nabla\times\curl u$.
  For the boundary condition \eqref{Cauchy-condition} or \eqref{Naiver-slip-condition},
  \begin{equation}\label{estimate1-1}
    \int\rho|\dot{u}|^2=\int\rho^{-1}(\mu\Delta u+(\mu+\lambda)\nabla\div u-\nabla P)^2dx\geq C^{-1}(\norm{\nabla G}_{L^2}^2+\norm{\mu\nabla\times\curl u}_{L^2}^2),
  \end{equation}
  due to $\rho^{-1}\geq C^{-1}>0$.

  Next we treat each term on the righthand side of \eqref{estimate1-0} under the boundary conditions \eqref{Dirichlet-condition} and \eqref{Naiver-slip-condition}, for the reason that the case \eqref{Cauchy-condition} is much simpler than the former cases due to the absence of boundary.

  First, we consider the boundary condition \eqref{Naiver-slip-condition} and \eqref{Cauchy-condition}.
  By \eqref{Naiver-slip-condition} and the identities that
  \begin{equation*}
    \begin{aligned}
    (\nabla\times a)\times b&=b\cdot\nabla a-\nabla a\cdot b,\\
    \nabla\times(a\times b)&=a\div b+b\cdot\nabla a-(b\div a+a\cdot\nabla b),
    \end{aligned}
  \end{equation*}
  then after integrating by parts one can get
  \begin{equation}\label{estimate1-2}
    \begin{aligned}
    &\quad\int u\cdot\nabla u\cdot\nabla\times\curl udx=\int\curl u\cdot\nabla\times(u\cdot\nabla u)dx\\
    &=\int\curl u\cdot\nabla\times(\curl u\times u)dx\\
    &=\frac{1}{2}\int|\curl u|^2\div udx-\int\curl u\cdot\nabla u\cdot\curl udx\\
    &\lesssim\norm{\curl u}_{L^r}\norm{\nabla u}_{L^{\frac{2r}{r-1}}}^2\lesssim\norm{\curl u}_{L^r}\norm{\nabla u}_{L^2}^{2-\frac{3}{r}}\norm{\nabla u}_{L^6}^{\frac{3}{r}}\\
    &\leq C(\epsilon)(1+\norm{\curl u}_{L^r}^s)\norm{\nabla u}_{L^2}^2+\epsilon(\norm{P}_{L^6}^2+\norm{G}_{L^6}^2+\norm{\omega}_{L^6}^2)\\
    &\leq C(\epsilon)(1+\norm{\curl u}_{L^r}^s)\norm{\nabla u}_{L^2}^2+\epsilon(\norm{\nabla G}_{L^2}^2+\norm{\nabla\omega}_{L^2}^2)+C
    \end{aligned}
  \end{equation}
  due to \eqref{GW-estimate-whole} or \eqref{Hodge-decomposition} and the bound of $\rho$. Here $\epsilon$ is sufficiently small, and \eqref{Serrin-index2} holds.

  For the case \eqref{Cauchy-condition},
  \begin{equation*}
    \norm{(\nabla G, \nabla\omega)}_{L^2}\leq C\norm{\rho\dot{u}}_{L^2}\leq C\norm{\sqrt{\rho}\dot{u}}_{L^2}.
  \end{equation*}

  For another case \eqref{Naiver-slip-condition}, since $\omega$ satifies $\omega\times n=0$ on $\pa\Omega$ and $\div\omega=0$, one can get from \eqref{Hodge-decomposition} that
  \begin{equation}\label{W-estimate1}
    \norm{\nabla\omega}_{L^p}\lesssim \norm{\nabla\times\omega}_{L^p}
  \end{equation}
  for any $p\in(1,\infty)$.

  Thus,
  \begin{equation}\label{estimate1-2-1}
  \begin{aligned}
    &\quad\int u\cdot\nabla u\cdot\nabla\times\curl udx\\
    &\leq C(\epsilon)(1+\norm{\curl u}_{L^r}^s)\norm{\nabla u}_{L^2}^2+\epsilon(\norm{\sqrt{\rho}\dot{u}}_{L^2}^2+\norm{\nabla G}_{L^2}^2+\norm{\nabla\times\omega}_{L^2}^2)+C.
  \end{aligned}
  \end{equation}
  Similarly, for the second term,
  \begin{equation}\label{estimate1-3}
    \begin{aligned}
    &\quad\int u\cdot\nabla u\cdot\nabla Gdx\\
    &=\int_{\pa\Omega}u\cdot\nabla u\cdot n GdS-\int\nabla u:\nabla u^tGdx-\int u\cdot\nabla\div uGdx\\
    &=-\int_{\pa\Omega}u\cdot\nabla n\cdot uGdS-\int\nabla u:\nabla u^t Gdx+\frac{1}{2(2\mu+\lambda)}\int\div uG^2dx\\
    &\quad+\frac{1}{2\mu+\lambda}\int(\div uPG+Pu\cdot\nabla G)dx\\
    &\lesssim \norm{G}_{L^2(\pa\Omega)}\norm{u}_{L^4(\pa\Omega)}^2+\norm{G}_{L^{\infty}}\norm{\nabla u}_{L^2}^2+\norm{\div u}_{L^{\infty}}(\norm{G}_{L^2}^2+1)\\
    &\quad+\epsilon\norm{\nabla G}_{L^2}^2+C(\epsilon)\norm{\sqrt{\rho}u}_{L^2}^2\\
    &\leq C\norm{u}_{L^4}^{\frac{3}{2}}\norm{\nabla u}_{L^4}^{\frac{1}{2}}\norm{G}_{L^2}^{\frac{1}{2}}\norm{\nabla G}_{L^2}^{\frac{1}{2}}+C(\norm{\nabla u}_{L^2}^2+1)(\norm{\div u}_{L^{\infty}}+1)+\epsilon\norm{\nabla G}_{L^2}^2+C(\epsilon)\\
    &\leq C\norm{\nabla u}_{L^2}^{\frac{3}{2}}(\norm{(\nabla G, \nabla\omega)}_{L^2}+\norm{P}_{L^4})^{\frac{1}{2}}(\norm{\nabla u}_{L^2}+\norm{P}_{L^2})^{\frac{1}{2}}\norm{\nabla G}_{L^2}^{\frac{1}{2}}\\
    &\quad+C(\norm{\nabla u}_{L^2}^2+1)(\norm{\div u}_{L^{\infty}}+1)+\epsilon\norm{\nabla G}_{L^2}^2+C(\epsilon)\\
    &\leq C(\norm{\nabla u}_{L^2}^2+1)(\norm{(\nabla G,\nabla\times\omega)}_{L^2}+1)+C(\epsilon)(\norm{\nabla u}_{L^2}^2+1)(\norm{\div u}_{L^{\infty}}+1)+\epsilon\norm{\nabla G}_{L^2}^2\\
    &\leq \epsilon\norm{(\nabla G,\nabla\times\omega)}_{L^2}^2+C(\epsilon)(\norm{\nabla u}_{L^2}^2+1)(\norm{\div u}_{L^{\infty}}+\norm{\nabla u}_{L^2}^2+1)
    \end{aligned}
  \end{equation}
  where we have used \eqref{Hodge-decomposition}, \eqref{W-estimate1}, \eqref{energy-0} the bound of density $\rho$ and the following trace inequality (see Theorem 1.6.6 in \cite{Brenner2008}) : Suppose that $\Omega$ has a Lipschitz boundary and $p\in[1,\infty]$. Then there is a constant $C$ such that
  \begin{equation}\label{trace-inequality}
    \norm{v}_{L^p(\pa\Omega)}\leq C\norm{v}_{L^p(\Omega)}^{1-\frac{1}{p}}\norm{v}_{W^{1,p}(\Omega)}^{\frac{1}{p}}, \quad \forall v\in W^{1,p}(\Omega).
  \end{equation}
  By analogy,
  \begin{equation}\label{estimate1-4}
    \begin{aligned}
    &\quad-\int u_t\cdot\nabla Pdx\\
    &=\frac{d}{dt}\int P\div udx-\int P_t\div udx\\
    &=\frac{d}{dt}\int P\div udx+\int \div(uP)\div udx+(\gamma-1)\int P(\div u)^2dx\\
    &=\frac{d}{dt}\int P\div udx-\frac{1}{2\mu+\lambda}\int Pu\cdot\nabla(G+P)dx+(\gamma-1)\int P(\div u)^2dx\\
    &\leq \frac{d}{dt}\int P\div udx+\epsilon\norm{\nabla G}_{L^2}^2+C(\epsilon)(1+\norm{\nabla u}_{L^2}^2).
    \end{aligned}
  \end{equation}
  Plugging \eqref{estimate1-1}, \eqref{W-estimate1}, \eqref{estimate1-2-1}, \eqref{estimate1-3} and \eqref{estimate1-4} into \eqref{estimate1-0} yields that for sufficiently small $\epsilon$,
  \begin{equation}\label{estimate1-0-1}
  \begin{aligned}
    &\quad\frac{d}{dt}\int(\frac{\mu}{2}|\curl u|^2+\frac{2\mu+\lambda}{2}(\div u)^2-P\div u)dx+C_0\norm{(\sqrt{\rho}\dot{u}, \nabla G, \nabla\omega)}_{L^2}^2\\
    &\leq C(\norm{\nabla u}_{L^2}^2+1)(\norm{\curl u}_{L^r}^s+\norm{\div u}_{L^{\infty}}+\norm{\nabla u}_{L^2}^2+1).
  \end{aligned}
  \end{equation}
  Then combining \eqref{energy-0} and applying Gronwall's inequality, we deduce that \eqref{energy-1} holds.

  Now we consider the case \eqref{Dirichlet-condition}, which is slightly different from the boundary condition \eqref{Cauchy-condition} and \eqref{Naiver-slip-condition}. Note that in \eqref{estimate1-2} it suffices to control $\norm{\nabla u}_{L^6}$. Here we introduce the decomposition $u=h+g$ in \cite{Sun2011} such that
  \begin{equation}\label{u-decomposition-eq1}
    \begin{cases}
      Lh:=\mu\Delta h+(\mu+\lambda)\nabla\div h=\nabla P,\\
      h=0,\mbox{ on }\pa\Omega,
    \end{cases}
  \end{equation}
  and
  \begin{equation}\label{u-decomposition-eq2}
    \begin{cases}
      Lg=\rho\dot{u},\\
      g=0,\mbox{ on }\pa\Omega.
    \end{cases}
  \end{equation}
  Then the following elliptic estimates raised later in \eqref{Lame-estimate} holds: for any $p\in(1,\infty)$,
  \begin{equation}\label{u-decomposition-estimate}
    \begin{aligned}
    \norm{h}_{W^{1,p}}\leq C\norm{P}_{L^p},\quad\norm{g}_{W^{2,p}}\leq C\norm{\rho\dot{u}}_{L^p}.
    \end{aligned}
  \end{equation}
  Then,
  \begin{equation}\label{u-estimate-L6}
    \norm{\nabla u}_{L^6}\leq C(\norm{\nabla h}_{L^6}+\norm{\nabla g}_{L^6})\leq C(\norm{P}_{L^6}+\norm{\rho\dot{u}}_{L^2})\leq C(1+\norm{\sqrt{\rho}\dot{u}}_{L^2}).
  \end{equation}
  With that, we can modify \eqref{estimate1-2} into
  \begin{equation}\label{estimate1-2-D}
    \int u\cdot\nabla u\cdot\nabla\times\curl udx\leq C(\epsilon)(1+\norm{\curl u}_{L^r}^s)\norm{\nabla u}_{L^2}^2+\epsilon(1+\norm{\sqrt{\rho}\dot{u}}_{L^2}).
  \end{equation}
  Also, through integration by part we obtain
  \begin{equation}\label{estimate1-3-D}
    \begin{aligned}
    &\quad\int u\cdot\nabla u\cdot\nabla Gdx-\int u_t\cdot\nabla Pdx\\
    &=-(2\mu+\lambda)\int\nabla u:\nabla u^t\div udx+\frac{2\mu+\lambda}{2}\int(\div u)^3dx+\int\nabla u:\nabla u^tPdx\\
    &\quad+\int Pu\cdot\nabla\div udx+\frac{d}{dt}\int P\div udx-\int P_t\div udx\\
    &=-(2\mu+\lambda)\int\nabla u:\nabla u^t\div udx+\frac{2\mu+\lambda}{2}\int(\div u)^3dx+\int\nabla u:\nabla u^tPdx\\
    &\quad-\int\div(Pu)\div udx+\frac{d}{dt}\int P\div udx+\int\div(Pu)\div udx+(\gamma-1)\int P(\div u)^2dx\\
    &\leq \frac{d}{dt}\int P\div udx+C(\norm{\div u}_{L^{\infty}}+1)\norm{\nabla u}_{L^2}^2.
    \end{aligned}
  \end{equation}
  Thus, by combining \eqref{estimate1-2-D}, \eqref{estimate1-3-D}, \eqref{Hodge-decomposition} with \eqref{energy-0}, the Gronwall's inequality yields the desired estimate \eqref{energy-1-D}.
  The proof of Lemma \ref{lem-energy-1} is completed.
\end{proof}

\begin{rem}
  The estimates in Lemma \ref{lem-energy-1} holds for assumption \eqref{assume1}, not only in Cauchy problem \eqref{Cauchy-condition} and Dirichlet problem  \eqref{Naiver-slip-condition}. The difference of the assumptions \eqref{assume1} and \eqref{assume2} comes from the estimates on $\nabla P$ or $\nabla\rho$. The Navier-slip boundary condition is more complex than the Cauchy case.
\end{rem}

\begin{rem}
  The difference of estimating on $\norm{\nabla u}_{L^2}$ between the case \eqref{Naiver-slip-condition} and \eqref{Dirichlet-condition} is a direct consequence of  boundary effects. On one hand, by noticing that $u\cdot\nabla u\cdot n\neq0$ on $\pa\Omega$ in \eqref{estimate1-3}, one need use the dissipation $\nabla G$ to deal with boundary terms. On the other hand, the velocity decomposition in \cite{Sun2011} dose not work since one cannot get the estimate on $\nabla G$ subject to $\rho\dot{u}$ just like \eqref{G-estimate-domain} from the elliptic regularity theory under the Dirichlet boundary condition \eqref{Dirichlet-condition}.
\end{rem}

\begin{rem}
  Under the condition \eqref{Dirichlet-condition}, there exists a key cancellation in the estimate \eqref{estimate1-3-D}, which enables us to deal with the trouble terms involving $\nabla P$. Of course, we can employ the velocity decomposition $u=h+g$ to estimate the trouble term $\int P_t\div udx$ by as follows:
  \begin{equation*}
    \begin{aligned}
    &\quad-\int P_t\div udx\\
    &=\int Lh_t\cdot hdx+\int\div(Pu)\div gdx+(\gamma-1)\int P\div u\div gdx\\
    &=-\frac{d}{dt}\int(\frac{\mu}{2}|\nabla h|^2+\frac{\mu+\lambda}{2}(\div h)^2)dx-\int Pu\cdot\nabla\div gdx+(\gamma-1)\int P\div u\div(u-h)dx\\
    &\leq -\frac{d}{dt}\int(\frac{\mu}{2}|\nabla h|^2+\frac{\mu+\lambda}{2}(\div h)^2)dx+C\norm{\sqrt{\rho}u}_{L^2}\norm{\nabla^2g}_{L^2}+C(\norm{\nabla u}_{L^2}^2+\norm{\nabla h}_{L^2}^2)\\
    &\leq -\frac{d}{dt}\int(\frac{\mu}{2}|\nabla h|^2+\frac{\mu+\lambda}{2}(\div h)^2)dx+\epsilon\norm{\sqrt{\rho}\dot{u}}_{L^2}^2+C(\epsilon)(\norm{\sqrt{\rho}u}_{L^2}^2+\norm{\nabla u}_{L^2}^2+\norm{P}_{L^2}^2).
    \end{aligned}
  \end{equation*}
\end{rem}

Next we turn to estimate $\nabla^2 u$ and $\nabla\rho$. For the cases \eqref{Cauchy-condition} or \eqref{Dirichlet-condition} and \eqref{Naiver-slip-condition}, we apply different methods of estimating.

\subsection{Estimates on $\nabla^2 u$ for $\Omega=\mathbb{R}^3$ or Dirichlet boundary condition \eqref{Dirichlet-condition}.}
\begin{lem}\label{lem-energy-2-whole1}
  Under the condition \eqref{Cauchy-condition}, it holds that for $0\leq T<T^*$,
  \begin{equation}\label{energy-2-whole}
    \sup_{0\leq t\leq T}\norm{\sqrt{\rho}\dot{u}}_{L^2}^2+\int_0^T(\norm{\nabla\dot{u}}_{L^2}^2+\norm{(\div u,\omega)}_{L^{\infty}}^2)dt\leq C.
  \end{equation}
  Under the condition \eqref{Dirichlet-condition}, it holds that for $0\leq T<T^*$,
  \begin{equation}\label{energy-2-D}
    \sup_{0\leq t\leq T}\norm{\sqrt{\rho}\dot{u}}_{L^2}^2+\int_0^T\norm{\nabla\dot{u}}_{L^2}^2dt\leq C.
  \end{equation}
\end{lem}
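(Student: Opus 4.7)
The plan is to carry out a Hoff-type material-derivative estimate on the momentum equation. Apply the operator $\pa_t+\div(u\,\cdot)$ to $\eqref{CNS-eq}_2$; using the continuity equation the left-hand side collapses to $\rho\ddot{u}_j$, while the right-hand side produces the parabolic part $\mu\Delta\dot{u}_j+(\mu+\lambda)\pa_j\div\dot{u}$, a pressure piece $-\pa_jP_t-\pa_i(u_i\pa_jP)$, and commutator pieces of schematic form $\nabla u\ast\nabla^2u$ coming from $\mu\div u\,\Delta u_j-\mu[\Delta,u\cdot\nabla]u_j$ and the analogous $(\mu+\lambda)$-terms. Testing against $\dot{u}$ and integrating by parts (legitimate since $\dot{u}|_{\pa\Omega}=0$ under \eqref{Dirichlet-condition}, because $u|_{\pa\Omega}=0$ forces both $u_t$ and $u\cdot\nabla u$ to vanish there, while there is no boundary in \eqref{Cauchy-condition}) yields
\begin{equation*}
\frac{1}{2}\frac{d}{dt}\int\rho|\dot{u}|^2\,dx+\mu\int|\nabla\dot{u}|^2\,dx+(\mu+\lambda)\int|\div\dot{u}|^2\,dx=\sum_k I_k.
\end{equation*}

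Using $P_t+\div(Pu)=(1-\gamma)P\div u$, one further integration by parts collapses the pressure contributions into $(1-\gamma)\int P\div u\,\div\dot{u}\,dx-\int P\,\pa_j u_i\,\pa_i\dot{u}_j\,dx$, which are absorbed by $\epsilon\norm{\nabla\dot{u}}_{L^2}^2+C\norm{\nabla u}_{L^2}^2$ via the $L^{\infty}$ bound \eqref{density-bound} on $\rho$. The remaining $I_k$'s, after integration by parts, become multilinear expressions in $\nabla u$, $\nabla u$ (or $\nabla^2u$), and $\nabla\dot{u}$ (or $\dot{u}$); this is where the Cauchy/Dirichlet split enters. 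The goal in both cases is to bound the right-hand side by $\epsilon\norm{\nabla\dot{u}}_{L^2}^2+C(1+B(t))(1+\norm{\sqrt{\rho}\dot{u}}_{L^2}^2)$ with $B\in L^1(0,T)$ and then close via Gronwall.

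For the Cauchy problem, $\nabla u$ is controlled by $(\div u,\curl u)$ and the elliptic estimate \eqref{GW-estimate-whole} gives $\norm{\nabla u}_{L^6}\lesssim\norm{G}_{L^6}+\norm{\omega}_{L^6}+\norm{P}_{L^6}\lesssim\norm{\sqrt{\rho}\dot{u}}_{L^2}+1$ without any boundary correction. Combined with Gagliardo--Nirenberg interpolation and the $\norm{\nabla u}_{L^2}$ bound from Lemma \ref{lem-energy-1}, the multilinear remainders fit the target form and Gronwall closes the $\norm{\sqrt{\rho}\dot{u}}_{L^2}^2$ bound together with the $L^2_tL^2_x$ dissipation of $\nabla\dot{u}$. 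The additional integrability $\int_0^T\norm{(\div u,\omega)}_{L^{\infty}}^2\,dt\le C$ then comes from $\norm{\div u}_{L^{\infty}}\le(2\mu+\lambda)^{-1}(\norm{G}_{L^{\infty}}+\norm{P}_{L^{\infty}})$ and a Gagliardo--Nirenberg interpolation on $\nabla G$ and $\nabla\omega$ via \eqref{GW-estimate-whole}, whose $\norm{\nabla\dot{u}}_{L^2}$-factor is already $L^2_t$-integrable by the previous step.

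For the Dirichlet problem, \eqref{GW-estimate-whole} is unavailable because $G$ carries no useful boundary data. In its place I would substitute the velocity decomposition $u=h+g$ of \eqref{u-decomposition-eq1}--\eqref{u-decomposition-eq2}, whose $L^p$-control \eqref{u-decomposition-estimate} yields $\norm{\nabla u}_{L^6}\le C(1+\norm{\sqrt{\rho}\dot{u}}_{L^2})$ via \eqref{u-estimate-L6}. This plays the same role as the Cauchy $G$-estimate when interpolating the multilinear remainders, and a final Gronwall closes \eqref{energy-2-D}. I expect the main obstacle to lie in the Dirichlet case: since $\nabla\rho$ is not yet under control, every piece must be arranged so that no factor of $\norm{\nabla\rho}_{L^p}$ or $\norm{\nabla P}_{L^p}$ ever surfaces; this is what the pressure cancellation $P_t+\div(Pu)=(1-\gamma)P\div u$ and the $h$-$g$ decomposition jointly achieve, and without either ingredient the energy argument would not close.
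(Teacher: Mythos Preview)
Your proposal is correct and follows essentially the same route as the paper: apply $\dot u^j[\pa_t+\div(u\,\cdot)]$ to the momentum equation, integrate by parts (using $\dot u|_{\pa\Omega}=0$ in the Dirichlet case), handle the pressure piece via $P_t+\div(Pu)=(1-\gamma)P\div u$, and reduce the viscous commutators to $\norm{\nabla u}_{L^4}^4$, which is then controlled by $C(1+\norm{\sqrt\rho\dot u}_{L^2})^3$ through $\norm{\nabla u}_{L^6}$ using the $G$--$\omega$ estimates \eqref{GW-estimate-whole} in the Cauchy case and the $h$--$g$ decomposition \eqref{u-decomposition-estimate}--\eqref{u-estimate-L6} in the Dirichlet case; Gronwall then closes using $\int_0^T\norm{\sqrt\rho\dot u}_{L^2}^2\,dt\le C$ from Lemma~\ref{lem-energy-1}, and the $L^2_tL^\infty_x$ bound on $(\div u,\omega)$ follows from $\norm{(\nabla G,\nabla\omega)}_{L^6}\lesssim\norm{\rho\dot u}_{L^6}\lesssim\norm{\nabla\dot u}_{L^2}$. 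One small caution: the dissipative terms $\mu\norm{\nabla\dot u}_{L^2}^2+(\mu+\lambda)\norm{\div\dot u}_{L^2}^2$ do not appear directly on the left after testing---they emerge only after the integrations by parts in what the paper calls $A_2,A_3$, with the commutators you mention as the residual---so be careful to extract the full coefficient $\mu$ (not merely a fraction) before absorbing the $\epsilon\norm{\nabla\dot u}_{L^2}^2$ errors.
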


\begin{proof}
  Due to the compatibility condition \eqref{compatibility-condition}, we can define $\sqrt{\rho}\dot{u}|_{t=0}=g\in L^2$. Applying $\dot{u}^j[\pa_t+\div(u\cdot)]$ to the $j$th component of $\eqref{CNS-eq}_2$ and integrating by parts yields
  \begin{equation}\label{whole-estimate2-0}
  \begin{aligned}
    \frac{d}{dt}\int\frac{1}{2}\rho|\dot{u}|^2dx&=\int[-\dot{u}^j[\pa_jP_t+\div(\pa_jPu)]+\mu\dot{u}^j[\Delta u_t^j+\div(u\Delta u^j)]\\
    &\quad+(\mu+\lambda)\dot{u}^j[\pa_j\div u_t+\div(u\pa_j\div u)]]dx\\
    &=\sum_{i=1}^3A_i.
  \end{aligned}
  \end{equation}
  Then integrating by parts gives that
  \begin{equation}\label{whole-estimate2-1}
    \begin{aligned}
      A_1&=\int(\div\dot{u}P_t+u\cdot\nabla\dot{u}\cdot\nabla P)dx\\
      &=\int(-\gamma\div\dot{u}P\div u-\div\dot{u}u\cdot\nabla P+u\cdot\nabla\dot{u}\cdot\nabla P)dx\\
      &=\int(-\gamma\div\dot{u}P\div u+P\div(\div\dot{u}u)-P\div(u\cdot\nabla\dot{u}))dx\\
      &\lesssim\norm{\nabla u}_{L^2}\norm{\nabla\dot{u}}_{L^2}\\
      &\leq\epsilon\norm{\nabla\dot{u}}_{L^2}^2+C(\epsilon).
    \end{aligned}
  \end{equation}
  Similarly,
  \begin{equation}\label{whole-estimate2-2}
    \begin{aligned}
    A_2&=-\mu\int(\nabla\dot{u}:\nabla u_t+u\cdot\nabla\dot{u}\cdot\Delta u)dx\\
    &=-\mu\int(|\nabla\dot{u}|^2-\nabla\dot{u}:\nabla u\cdot\nabla u-\nabla\dot{u}:(u\cdot\nabla)\nabla u+u\cdot\nabla\dot{u}\cdot\Delta u)dx\\
    &=-\mu\int(|\nabla\dot{u}|^2-\nabla\dot{u}:\nabla u\cdot\nabla u+\div u\nabla\dot{u}:\nabla u+u\cdot\nabla(\nabla\dot{u}):\nabla u+u\cdot\nabla\dot{u}\cdot\Delta u)dx\\
    &=-\mu\int(|\nabla\dot{u}|^2-\nabla\dot{u}:\nabla u\cdot\nabla u+\div u\nabla\dot{u}:\nabla u-\nabla u\cdot\nabla\dot{u}:\nabla u)dx\\
    &\leq-\frac{\mu}{2}\norm{\nabla\dot{u}}_{L^2}^2+C\norm{\nabla u}_{L^4}^4.
    \end{aligned}
  \end{equation}
  Also,
  \begin{equation}\label{whole-estimate2-3}
    \begin{aligned}
    A_3&=-(\mu+\lambda)\int(\div\dot{u}\div u_t+u\cdot\nabla\dot{u}\cdot\nabla\div u)dx\\
    &=-(\mu+\lambda)\int((\div\dot{u})^2-\div\dot{u}\div(u\cdot\nabla u)+u\cdot\nabla\dot{u}\cdot\nabla\div u)dx\\
    &=-(\mu+\lambda)\int((\div\dot{u})^2-\div\dot{u}\nabla u:\nabla u^t+\div\dot{u}(\div u)^2+u\cdot\nabla\div\dot{u}\div u+u\cdot\nabla\dot{u}\cdot\nabla\div u)dx\\
    &=-(\mu+\lambda)\int((\div\dot{u})^2-\div\dot{u}\nabla u:\nabla u^t+\div\dot{u}(\div u)^2-\div u\nabla u:\nabla\dot{u}^t)dx\\
    &\leq-\frac{\mu+\lambda}{2}\norm{\div\dot{u}}_{L^2}^2+\epsilon\norm{\nabla\dot{u}}_{L^2}^2
    +C(\epsilon)\norm{\nabla u}_{L^4}^4.
    \end{aligned}
  \end{equation}
  Then for the case \eqref{Cauchy-condition}, inserting \eqref{whole-estimate2-1}-\eqref{whole-estimate2-3} into \eqref{whole-estimate2-0} yields
  \begin{equation}\label{whole-estimate2-0-1}
  \begin{aligned}
    &\quad\frac{d}{dt}\norm{\sqrt{\rho}\dot{u}}_{L^2}^2+C_0\norm{\nabla\dot{u}}_{L^2}^2\\
    &\leq C(\norm{\nabla u}_{L^4}^4+1)\leq C(\norm{\nabla u}_{L^2}\norm{\nabla u}_{L^6}^3+1)\\
    &\leq C(\norm{(G, \omega)}_{L^6}^3+\norm{P}_{L^6}^3+1)\\
    &\leq C(\norm{(\nabla G, \nabla\omega)}_{L^2}^3+1)\leq C(\norm{\rho\dot{u}}_{L^2}^3+1)\\
    &\leq C(\norm{\sqrt{\rho}\dot{u}}_{L^2}^3+1).
  \end{aligned}
  \end{equation}
  Then combining \eqref{energy-1} and applying the Gronwall's inequality gives that
  \begin{equation}\label{energy-2-whole1}
    \sup_{0\leq t\leq T}\norm{\sqrt{\rho}\dot{u}}_{L^2}^2+\int_0^T\norm{\nabla\dot{u}}_{L^2}^2dt\leq C.
  \end{equation}
  Therefore,
  \begin{equation}
    \begin{aligned}
    \int_0^T\norm{(\div u,\omega)}_{L^{\infty}}^2dx&\leq C\int_0^T(\norm{(G,\omega)}_{L^2}^2+\norm{(\nabla G,\nabla\omega)}_{L^6}^2+1)dt\\
    &\leq C\int_0^T(\norm{(G,\omega)}_{L^2}^2+\norm{\rho\dot{u}}_{L^6}^2+1)dt\\
    &\leq C\int_0^T(1+\norm{\nabla\dot{u}}_{L^2}^2)dt\leq C.
    \end{aligned}
  \end{equation}
  which together with \eqref{energy-2-whole1} gives \eqref{energy-2-whole}.

  For the another case \eqref{Dirichlet-condition}, due to \eqref{u-estimate-L6} one can modify the estimate \eqref{whole-estimate2-0-1} as
  \begin{equation}\label{Dirichlet-estimate2}
    \frac{d}{dt}\norm{\sqrt{\rho}\dot{u}}_{L^2}^2+C_0\norm{\nabla\dot{u}}_{L^2}^2\\
    \leq C(\norm{\nabla u}_{L^2}\norm{\nabla u}_{L^6}^3+1)\leq C(1+\norm{\sqrt{\rho}\dot{u}}_{L^2}^3).
  \end{equation}
  Then with \eqref{energy-1-D}, the Gronwall's inequality yields \eqref{energy-2-D}.

\end{proof}

Before the next step, we recall the Lam\'{e}'s system (see \cite{Cai-Li2023}):
\begin{equation}\label{Lame-system}
  \begin{cases}
    \mu\Delta u+(\mu+\lambda)\nabla\div u=f,\\
    \mbox{\eqref{Cauchy-condition} or \eqref{Dirichlet-condition} or \eqref{Naiver-slip-condition} holds}.
  \end{cases}
\end{equation}
Then for any $p\in(1,\infty)$ and integer $k\geq 0$, there exists a constant $C$ such that
\begin{equation}\label{Lame-estimate}
\begin{aligned}
\norm{u}_{W^{k+2,p}}&\leq C(\norm{f}_{W^{k,p}}+\norm{u}_{L^p}), \\
\norm{u}_{W^{k+1,p}}&\leq C(\norm{g}_{W^{k,p}}+\norm{u}_{L^p}), \mbox{ for }f=\nabla g.
\end{aligned}
\end{equation}
In particular, for the case \eqref{Dirichlet-condition} and \eqref{Naiver-slip-condition}, $\norm{u}_{L^p}$ on the righthand side of inequalities can be removed. For the case \eqref{Cauchy-condition}, the disappearance of $\norm{u}_{L^p}$ only occurs at the estimate of $\norm{\nabla^2u}_{L^p}$.

The next lemma is used to bound the density gradient.

\begin{lem}\label{lem-energy-2-whole2}
  Under the condition \eqref{Cauchy-condition} or \eqref{Dirichlet-condition}, for any $q\in(3,\min\{6,\tilde{q}\}]$ with $\tilde{q}$ in \eqref{initial-data-condition}, it holds that for $0\leq T<T^*$,
  \begin{equation*}
    \sup_{0\leq t\leq T}(\norm{\rho}_{W^{1,q}}+\norm{\nabla^2u}_{L^2})\leq C.
  \end{equation*}
\end{lem}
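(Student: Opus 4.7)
The plan is to set up a coupled logarithmic Gronwall argument for $\norm{\nabla\rho}_{L^q}$, obtaining the bound on $\norm{\nabla^2 u}_{L^2}$ as an immediate byproduct through elliptic regularity. First I will differentiate the continuity equation, test with $q|\nabla\rho|^{q-2}\nabla\rho$, and integrate by parts to obtain the standard inequality
\[
\frac{d}{dt}\norm{\nabla\rho}_{L^q} \leq C(1+\norm{\nabla u}_{L^{\infty}})\norm{\nabla\rho}_{L^q} + C\norm{\nabla^2 u}_{L^q}.
\]

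To convert $\norm{\nabla^2 u}_{L^q}$ into controllable quantities, I will rewrite the momentum equation as the Lam\'e system $\mu\Delta u+(\mu+\lambda)\nabla\div u=\rho\dot u+\nabla P$ and apply \eqref{Lame-estimate}, which gives $\norm{\nabla^2 u}_{L^q}\leq C(\norm{\rho\dot u}_{L^q}+\norm{\nabla\rho}_{L^q})$. For $\norm{\rho\dot u}_{L^q}$ with $q\in(3,6]$, I combine the $L^{\infty}$-bound on $\rho$ from \eqref{density-bound}, the embedding $\norm{\dot u}_{L^6}\lesssim\norm{\nabla\dot u}_{L^2}$ (valid because $\dot u$ decays at infinity under \eqref{Cauchy-condition} or vanishes on $\pa\Omega$ under \eqref{Dirichlet-condition}), and the Gagliardo--Nirenberg interpolation to obtain
\[
\norm{\rho\dot u}_{L^q} \lesssim \norm{\sqrt{\rho}\dot u}_{L^2}^{(6-q)/(2q)}\norm{\nabla\dot u}_{L^2}^{3(q-2)/(2q)} + 1.
\]
Lemma \ref{lem-energy-2-whole1} then ensures that this quantity is integrable in time.

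To control the factor $\norm{\nabla u}_{L^{\infty}}$, I will invoke a Beale--Kato--Majda type inequality in the spirit of Huang--Li 2022, namely
\[
\norm{\nabla u}_{L^{\infty}}\leq C\bigl(1+\norm{\div u}_{L^{\infty}}+\norm{\omega}_{L^{\infty}}\bigr)\log\bigl(e+\norm{\nabla^2 u}_{L^q}\bigr)+C\norm{\nabla u}_{L^2}+C.
\]
For the Cauchy problem, $\norm{\div u}_{L^{\infty}}\in L^1(0,T)$ by assumption \eqref{assume1} and $\norm{\omega}_{L^{\infty}}\in L^2(0,T)$ by \eqref{energy-2-whole}, so the prefactor of the logarithm lies in $L^1(0,T)$. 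For the Dirichlet problem, $L^{\infty}$-control of $\omega$ is not directly available; instead, I will exploit the velocity decomposition $u=h+g$ from \eqref{u-decomposition-eq1}--\eqref{u-decomposition-eq2} together with the Serrin bound on $\mathcal{S}(u)$ from \eqref{assume1} to furnish a substitute prefactor that is still $L^1$-integrable in time.

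Substituting these estimates into the differential inequality for $\norm{\nabla\rho}_{L^q}$ will produce a log-type Gronwall inequality
\[
\frac{d}{dt}\norm{\nabla\rho}_{L^q}\leq \Phi(t)\,\bigl(1+\norm{\nabla\rho}_{L^q}\bigr)\log\bigl(e+\norm{\nabla\rho}_{L^q}\bigr)+\chi(t),
\]
with $\Phi,\chi\in L^1(0,T)$, from which $\sup_{[0,T]}\norm{\nabla\rho}_{L^q}\leq C$ follows by standard log-Gronwall; then $\norm{\nabla^2 u}_{L^2}\leq C(\norm{\sqrt{\rho}\dot u}_{L^2}+\norm{\nabla\rho}_{L^2})\leq C$ is immediate from Lam\'e and H\"older. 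The hard part will be the Dirichlet case: with no boundary information on $G$ and only a Serrin bound on $\mathcal{S}(u)$, producing a BKM-type inequality whose logarithmic prefactor integrates in time requires a careful interlocking of the velocity decomposition with the estimates on $P$ and $\rho\dot u$.
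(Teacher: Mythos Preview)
Your Cauchy-problem argument matches the paper's proof essentially line for line: the differential inequality for $\norm{\nabla\rho}_{L^q}$, the Lam\'e estimate \eqref{Lame-estimate}, the BKM inequality \eqref{gradient-u-infty1} with prefactor $\norm{(\div u,\omega)}_{L^\infty}$, and the log-Gronwall closure using $\norm{(\div u,\omega)}_{L^\infty}\in L^2(0,T)$ from \eqref{energy-2-whole}.

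For the Dirichlet case there is a genuine gap. You propose to use the velocity decomposition $u=h+g$ \emph{together with the Serrin bound on $\mathcal{S}(u)$} to produce an $L^1$-in-time logarithmic prefactor. But the Serrin assumption \eqref{assume1} only gives $\mathcal{S}(u)\in L^s(0,T;L^r)$ with $\tfrac{2}{s}+\tfrac{3}{r}\le 2$, $r>\tfrac{3}{2}$; this does \emph{not} in general yield $\norm{\omega}_{L^\infty}\in L^1(0,T)$, so a BKM inequality of the form $\norm{\nabla u}_{L^\infty}\le C(1+\norm{\div u}_{L^\infty}+\norm{\omega}_{L^\infty})\log(\cdots)$ cannot be closed this way. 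The Serrin bound was already fully expended in Lemma~\ref{lem-energy-1} and plays no further role here.

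The paper's route is different and cleaner: split $\norm{\nabla u}_{L^\infty}\le\norm{\nabla h}_{L^\infty}+\norm{\nabla g}_{L^\infty}$, handle $\nabla g$ by Sobolev embedding and \eqref{u-decomposition-estimate} as $\norm{\nabla g}_{L^\infty}\le C\norm{\rho\dot u}_{L^q}\le C(1+\norm{\nabla\dot u}_{L^2})$, and for $\nabla h$ invoke the endpoint logarithmic inequality $\norm{\nabla h}_{L^\infty}\le C\norm{\nabla h}_{BMO}\log(e+\norm{\nabla^2 h}_{L^q})+C$ together with the key Lam\'e--BMO bound $\norm{\nabla h}_{BMO}\le C(\norm{P}_{L^\infty}+\norm{P}_{L^2})$ from \cite{Sun2011}. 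Since $\norm{P}_{L^\infty}$ is bounded by \eqref{density-bound}, the logarithmic prefactor is a \emph{constant}, not merely $L^1$ in time. This yields
\[
\norm{\nabla u}_{L^\infty}\le C\log(e+\norm{\nabla\rho}_{L^q}+\norm{\nabla\dot u}_{L^2})+C\norm{\nabla\dot u}_{L^2}+C,
\]
and the log-Gronwall closes using only $\norm{\nabla\dot u}_{L^2}\in L^2(0,T)$ from \eqref{energy-2-D}. Your plan should be amended to incorporate this BMO estimate rather than attempt to squeeze $L^\infty$-control of $\omega$ out of the Serrin hypothesis.
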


\begin{proof}
It is easy to check that,
\begin{equation*}
\begin{aligned}
  &(|\nabla\rho|^q)_t+\div(|\nabla\rho|^qu)+(q-1)|\nabla\rho|^q\div u\\
  &\quad+q|\nabla\rho|^{q-2}(\nabla\rho)^t\nabla u(\nabla\rho)+q\rho|\nabla\rho|^{q-2}\nabla\rho\cdot\nabla\div u=0,
\end{aligned}
\end{equation*}
which yields that for the case \eqref{Cauchy-condition} or \eqref{Naiver-slip-condition},
\begin{equation}\label{gradient-density}
\begin{aligned}
    \frac{d}{dt}\norm{\nabla\rho}_{L^q}&\leq C(\norm{\nabla u}_{L^{\infty}}+\norm{\div u}_{L^{\infty}}+1)\norm{\nabla\rho}_{L^q}+C\norm{\nabla G}_{L^q}\\
    &\leq C(\norm{\nabla u}_{L^{\infty}}+\norm{\div u}_{L^{\infty}}+1)\norm{\nabla\rho}_{L^q}+C\norm{\rho\dot{u}}_{L^q}.
\end{aligned}
\end{equation}
  Applying \eqref{Lame-estimate} to $\eqref{CNS-eq}_2$ gives that
  \begin{equation}\label{gradient-u-2rd-whole}
    \begin{aligned}
    \norm{\nabla^2 u}_{L^q}&\lesssim\norm{\rho\dot{u}}_{L^q}+\norm{\nabla P}_{L^q}\\
    &\lesssim\norm{\rho\dot{u}}_{L^2}+\norm{\rho\dot{u}}_{L^6}+\norm{\nabla\rho}_{L^q}\\
    &\leq C(1+\norm{\nabla\dot{u}}_{L^2}+\norm{\nabla\rho}_{L^q}),
    \end{aligned}
  \end{equation}
  which, combined with the Beale-Kato-Majda type inequality (see \cite{Huang2011-2}), leads to that for the case \eqref{Cauchy-condition},
  \begin{equation}\label{gradient-u-infty1}
  \begin{aligned}
    \norm{\nabla u}_{L^{\infty}}&\leq C(\norm{\div u}_{L^{\infty}}+\norm{\omega}_{L^{\infty}})\ln(e+\norm{\nabla^2u}_{L^q})+C\norm{\nabla u}_{L^2}+C\\
    &\leq C(\norm{\div u}_{L^{\infty}}+\norm{\omega}_{L^{\infty}})\ln(e+\norm{\nabla\dot{u}}_{L^2})\\
    &\quad +C(\norm{\div u}_{L^{\infty}}+\norm{\omega}_{L^{\infty}})\ln(e+\norm{\nabla\rho}_{L^q})+C.
  \end{aligned}
  \end{equation}
  Set
  \begin{equation*}
    f(t):=e+\norm{\nabla\rho}_{L^q},\quad g(t):=(\norm{(\div u,\omega)}_{L^{\infty}}+\norm{\nabla\dot{u}}_{L^{2}}+1)\ln(e+\norm{\nabla\dot{u}}_{L^2}).
  \end{equation*}
  It follows from \eqref{gradient-density} and \eqref{gradient-u-infty1} that
  \begin{equation}\label{Gronwall-inequality}
    f'(t)\leq Cg(t)f(t)+Cg(t)f(t)\ln f(t)+Cg(t)\qquad\mbox{($\log$-type Gronwall's inequality)}
  \end{equation}
  which gives
  \begin{equation*}
    (\ln f(t))'\leq Cg(t)+g(t)\ln f(t)
  \end{equation*}
  by $f(t)>1$. The Gronwall's inequality and \eqref{energy-2-whole} yields
  \begin{equation*}
    \sup_{0\leq t\leq T}f(t)\leq C.
  \end{equation*}
  Then similar to \eqref{gradient-u-2rd-whole} and by Gagliardo-Nirenberg's inequality in \cite{Nirenberg1959}, we have for $0\leq t\leq T$,
  \begin{equation*}
    \norm{\nabla^2u}_{L^2}\leq C(\norm{\rho\dot{u}}_{L^2}+\norm{\nabla\rho}_{L^2})\leq C(\norm{\rho\dot{u}}_{L^2}+\norm{\nabla\rho}_{L^q}+\norm{\rho}_{L^1})\leq C.
  \end{equation*}

  For the case \eqref{Dirichlet-condition}, we need a new estimate for $\norm{\nabla u}_{L^{\infty}}$ as follows: for the decomposition $u=h+g$,
  \begin{equation}\label{gradient-u-infty-D}
  \begin{aligned}
    \norm{\nabla u}_{L^{\infty}}&\leq \norm{\nabla h}_{L^{\infty}}+\norm{\nabla g}_{L^{\infty}}\\
    &\leq C(1+\norm{\nabla h}_{BMO}\ln(e+\norm{\nabla^2h}_{L^q}))+C\norm{g}_{W^{2,q}}\\
    &\leq C\norm{P}_{L^{\infty}\cap L^2}\ln(e+\norm{\nabla^2u}_{L^q}+\norm{\nabla^2g}_{L^q})+C\norm{\rho\dot{u}}_{L^q}+C\\
    &\leq C\ln(e+\norm{\nabla^2u}_{L^q}+\norm{\nabla\dot{u}}_{L^2})+C\norm{\nabla\dot{u}}_{L^2}+C,
    \end{aligned}
  \end{equation}
  where we have used \eqref{u-decomposition-estimate} and the inequalities from \cite{Sun2011} (See Proposition 2.2 and Lemma 2.3):
  \begin{align}\label{new-inequality}
    \norm{f}_{L^{\infty}}&\leq C\norm{ f}_{BMO}\ln(e+\norm{\nabla f}_{L^q})+C, \textrm{ for any }f\in W^{1,p}(\Omega), \\
    \norm{\nabla h}_{BMO}&\leq C(\norm{P}_{L^{\infty}}+\norm{P}_{L^2}), \textrm{ if $h$ satisfies \eqref{u-decomposition-eq1}}.
  \end{align}
  Then combined with \eqref{gradient-u-2rd-whole}, we modify the estimate \eqref{gradient-density} as
  \begin{equation}
    \begin{aligned}
    \frac{d}{dt}\norm{\nabla\rho}_{L^q}&\leq C\norm{\nabla u}_{L^{\infty}}\norm{\nabla\rho}_{L^q}+C\norm{\nabla^2u}_{L^q}\\
    &\leq C\norm{\nabla\rho}_{L^q}\ln(e+\norm{\nabla\rho}_{L^q}+\norm{\nabla\dot{u}}_{L^2})+C(1+\norm{\nabla\rho}_{L^q}+\norm{\nabla\dot{u}}_{L^2})\\
    &\leq C\norm{\nabla\rho}_{L^q}\ln(e+\norm{\nabla\rho}_{L^q})+\norm{\nabla\rho}_{L^q}\ln(e+\norm{\nabla\dot{u}}_{L^2})+C(1+\norm{\nabla\dot{u}}_{L^2}).
    \end{aligned}
  \end{equation}
  The log-type Gronwall's inequality as \eqref{Gronwall-inequality} yields
  \begin{equation*}
    \sup_{0\leq t\leq T}\norm{\nabla\rho}_{L^q}\leq C,
  \end{equation*}
  and the bound of $\norm{\nabla^2u}_{L^2}$ follows immediately.
 Thus Lemma \ref{lem-energy-2-whole2} is finished.
\end{proof}

\subsection{Estimates on $\nabla^2 u$ for Navier-slip boundary condition \eqref{Naiver-slip-condition}.}
We next improve the regularity of the density $\rho$ and velocity $u$ in the bounded domain $\Omega$ with Navier-slip boundary condition \eqref{Naiver-slip-condition}.

\begin{lem}
  Under the condition \eqref{Naiver-slip-condition}, it holds that for $0\leq t\leq T<T^*$,
  \begin{equation}\label{energy-2-domain1}
    \frac{d}{dt}\norm{\sqrt{\rho}u_t}_{L^2}^2+C_0\norm{\nabla u_t}_{L^2}^2\leq C(1+\norm{\sqrt{\rho}\dot{u}}_{L^2})(\norm{\sqrt{\rho}u_t}_{L^2}^2+\norm{\nabla\rho}_{L^2}^2)+C.
  \end{equation}
\end{lem}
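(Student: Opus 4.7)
The plan is to time-differentiate the momentum equation $\eqref{CNS-eq}_2$, test the result against $u_t$, and collect terms into the form \eqref{energy-2-domain1}. The identity $\int\rho u_{tt}\cdot u_t\,dx = \frac{d}{dt}\frac12\norm{\sqrt\rho u_t}_{L^2}^2 - \frac12\int\rho_t|u_t|^2\,dx$, combined with the continuity equation $\rho_t=-\div(\rho u)$ and $u\cdot n=0$ on $\pa\Omega$, produces $\frac{d}{dt}\norm{\sqrt\rho u_t}_{L^2}^2$ on the LHS after the standard cancellation between $\frac12\int\rho_t|u_t|^2\,dx$ and part of $\int\rho u\cdot\nabla u_t\cdot u_t\,dx$.

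By time-differentiating the Navier-slip conditions on the (fixed) boundary, $u_t\cdot n=0$ and $\curl u_t\times n=0$ hold on $\pa\Omega$. Rewriting $\mu\Delta u_t+(\mu+\lambda)\nabla\div u_t=(2\mu+\lambda)\nabla\div u_t-\mu\nabla\times\curl u_t$ and integrating by parts, every boundary contribution vanishes, so the viscous part contributes $-\mu\norm{\curl u_t}_{L^2}^2-(2\mu+\lambda)\norm{\div u_t}_{L^2}^2$, which dominates $-C_0\norm{\nabla u_t}_{L^2}^2$ via \eqref{Hodge-decomposition}.

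What remains is to control the convective terms (such as $\int\rho u_t\cdot\nabla u\cdot u_t$ and $\int\rho_t u\cdot\nabla u\cdot u_t$, the latter handled via continuity) and the pressure term $\int P_t\,\div u_t\,dx$ (obtained by integrating $-\int\nabla P_t\cdot u_t$ by parts using $u_t\cdot n=0$). The key tool is the elliptic estimate
\begin{equation*}
\norm{\nabla u}_{L^6}\lesssim\norm{\nabla G}_{L^2}+\norm{\nabla\omega}_{L^2}+\norm{P}_{L^6}\leq C(1+\norm{\sqrt\rho\dot u}_{L^2}),
\end{equation*}
coming from \eqref{G-estimate-domain}, \eqref{Hodge-decomposition} and Sobolev. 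A typical convective estimate is
\begin{equation*}
\int\rho|u_t|^2|\nabla u|\,dx\lesssim\norm{\sqrt\rho u_t}_{L^2}\norm{u_t}_{L^6}\norm{\nabla u}_{L^3}\leq\epsilon\norm{\nabla u_t}_{L^2}^2+C(1+\norm{\sqrt\rho\dot u}_{L^2})\norm{\sqrt\rho u_t}_{L^2}^2,
\end{equation*}
after using Sobolev $\norm{u_t}_{L^6}\lesssim\norm{\nabla u_t}_{L^2}$ and the interpolation $\norm{\nabla u}_{L^3}^2\leq\norm{\nabla u}_{L^2}\norm{\nabla u}_{L^6}\leq C(1+\norm{\sqrt\rho\dot u}_{L^2})$. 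For the pressure term I would substitute $P_t=-\gamma P\div u-u\cdot\nabla P$; the first piece is harmless, and the second yields
\begin{equation*}
\Bigl|\int u\cdot\nabla P\,\div u_t\,dx\Bigr|\lesssim\norm{u}_{L^\infty}\norm{\nabla\rho}_{L^2}\norm{\nabla u_t}_{L^2}\leq\epsilon\norm{\nabla u_t}_{L^2}^2+C(1+\norm{\sqrt\rho\dot u}_{L^2})\norm{\nabla\rho}_{L^2}^2,
\end{equation*}
via the Agmon/Gagliardo-Nirenberg bound $\norm{u}_{L^\infty}^2\lesssim\norm{u}_{L^6}\norm{\nabla u}_{L^6}\leq C(1+\norm{\sqrt\rho\dot u}_{L^2})$.

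After absorbing all $\epsilon\norm{\nabla u_t}_{L^2}^2$ terms into the LHS dissipation, \eqref{energy-2-domain1} follows. The main obstacle is the pressure term: unlike in the Cauchy or Dirichlet cases, here we have no choice but to let $\norm{\nabla\rho}_{L^2}^2$ appear on the RHS, and the delicacy lies in arranging the interpolations so that its coefficient is of the precise form $1+\norm{\sqrt\rho\dot u}_{L^2}$. This specific form is what makes it possible to couple \eqref{energy-2-domain1} with the companion inequality for $\norm{\nabla\rho}_{L^2}$ obtained in Lemma \ref{lem-energy-2-domain} and close the argument via a $\log$-type Gronwall procedure.
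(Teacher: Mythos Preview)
Your outline is correct and follows the same route as the paper: time-differentiate $\eqref{CNS-eq}_2$, test against $u_t$, extract the dissipation $\mu\norm{\curl u_t}_{L^2}^2+(2\mu+\lambda)\norm{\div u_t}_{L^2}^2$ via the Navier-slip conditions and \eqref{Hodge-decomposition}, handle $P_t$ exactly as you indicate, and bound the convective terms by interpolation using $\norm{\nabla u}_{L^6}\lesssim 1+\norm{\sqrt\rho\dot u}_{L^2}$ and $\norm{u}_{L^\infty}^2\lesssim\norm{u}_{L^6}\norm{\nabla u}_{L^6}$.

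One ingredient is missing from your sketch, however. After you integrate $\int\rho_t\,u\cdot\nabla u\cdot u_t\,dx$ by parts via the continuity equation, among the resulting pieces is $\int\rho|u|^2|u_t||\nabla^2u|\,dx$, which cannot be controlled with $\norm{\nabla u}_{L^6}$ alone. If instead you try to avoid $\nabla^2u$ by substituting $\rho_t=-u\cdot\nabla\rho-\rho\div u$ directly, the term $\int|u|^2|\nabla\rho||\nabla u||u_t|\,dx$ forces a coefficient like $(1+\norm{\sqrt\rho\dot u}_{L^2})^3$ in front of $\norm{\nabla\rho}_{L^2}^2$, which is too strong for the subsequent Gronwall argument (only $\norm{\sqrt\rho\dot u}_{L^2}^2\in L^1_t$ is available from \eqref{energy-1}). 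The paper closes this by invoking the Lam\'e estimate \eqref{Lame-estimate} on $\eqref{CNS-eq}_2$ to obtain
\[
\norm{\nabla^2u}_{L^2}\leq C(\norm{\sqrt\rho u_t}_{L^2}+\norm{\nabla\rho}_{L^2}+1),
\]
so that $\norm{u}_{L^6}^2\norm{u_t}_{L^6}\norm{\nabla^2u}_{L^2}\leq\epsilon\norm{\nabla u_t}_{L^2}^2+C\norm{\nabla^2u}_{L^2}^2$ contributes precisely $C(\norm{\sqrt\rho u_t}_{L^2}^2+\norm{\nabla\rho}_{L^2}^2)+C$ to the right-hand side. You should add this step to your argument.
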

\begin{proof}
  Applying $\pa_t$ on $\eqref{CNS-eq}_2$, taking inner product with $u_t$ and integrating by parts yields
  \begin{equation}\label{estimate-2-domain0}
    \begin{aligned}
    &\quad\frac{d}{dt}\int\frac{1}{2}\rho|u_t|^2dx+\int(\mu|\curl u_t|^2+(2\mu+\lambda)(\div u_t)^2)dx\\
    &=\int P_t\div u_tdx-\int\rho u_t\cdot\nabla u\cdot u_tdx-\int\rho u\cdot\nabla(|u_t|^2+u\cdot\nabla u\cdot u_t)dx\\
    &\lesssim\int(|u||\nabla\rho|+|\nabla u|)|\nabla u_t|dx+\int(\rho|u_t|^2|\nabla u|+\rho|u||u_t||\nabla u_t|)dx\\
    &\quad+\int(|u||u_t||\nabla u|^2+|u|^2|u_t||\nabla^2u|+|u|^2|\nabla u||\nabla u_t|)dx\\
    &=\sum_{i=1}^{3}B_i.
    \end{aligned}
  \end{equation}
By Gagliardo-Nirenberg's inequality (see \cite{Nirenberg1959}), \eqref{Hodge-decomposition} and \eqref{energy-1}, one gets
\begin{equation}\label{estimate-2-domain1}
  \begin{aligned}
  B_1&\lesssim(\norm{u}_{L^{\infty}}\norm{\nabla\rho}_{L^2}+\norm{\nabla u}_{L^2})\norm{\nabla u_t}_{L^2}\\
  &\leq\epsilon\norm{\nabla u_t}_{L^2}^2+C(\epsilon)(1+\norm{u}_{L^6}\norm{\nabla u}_{L^6}\norm{\nabla\rho}_{L^2}^2)\\
  &\leq\epsilon\norm{\nabla u_t}_{L^2}^2+C(\epsilon)(1+(\norm{P}_{L^6}+\norm{(\nabla G,\nabla\omega)}_{L^2})\norm{\nabla\rho}_{L^2}^2)\\
  &\leq\epsilon\norm{\nabla u_t}_{L^2}^2+C(\epsilon)(1+\norm{\sqrt{\rho}\dot{u}}_{L^2})\norm{\nabla\rho}_{L^2}^2+C(\epsilon).
  \end{aligned}
\end{equation}
Similarly,
\begin{equation}\label{estimate-2-domain2}
  \begin{aligned}
  B_2&\lesssim\norm{\sqrt{\rho}u_t}_{L^2}\norm{u_t}_{L^6}\norm{\nabla u}_{L^3}+\norm{u}_{L^{\infty}}\norm{\sqrt{\rho}u_t}_{L^2}\norm{\nabla u_t}_{L^2}\\
  &\leq\epsilon\norm{\nabla u_t}_{L^2}^2+C(\epsilon)\norm{\nabla u}_{L^2}\norm{\nabla u}_{L^6}\norm{\sqrt{\rho}u_t}_{L^2}^2\\
  &\leq\epsilon\norm{\nabla u_t}_{L^2}^2+C(\epsilon)(1+\norm{\sqrt{\rho}\dot{u}}_{L^2})\norm{\sqrt{\rho}u_t}_{L^2}^2,
  \end{aligned}
\end{equation}
and
\begin{equation}\label{estimate-2-domain3}
  \begin{aligned}
  B_3&\lesssim\norm{u}_{L^6}\norm{u_t}_{L^6}\norm{\nabla u}_{L^2}\norm{\nabla u}_{L^6}+\norm{u}_{L^6}^2\norm{u_t}_{L^6}\norm{\nabla^2u}_{L^2}+\norm{u}_{L^6}^2\norm{\nabla u}_{L^6}\norm{\nabla u_t}_{L^2}\\
  &\leq\epsilon\norm{\nabla u_t}_{L^2}^2+C(\epsilon)\norm{\nabla u}_{L^2}^4\norm{\nabla^2u}_{L^2}^2\\
  &\leq\epsilon\norm{\nabla u_t}_{L^2}^2+C(\epsilon)\norm{\nabla^2u}_{L^2}^2.
  \end{aligned}
\end{equation}
From the estimate \eqref{Lame-estimate} for the Lam\'{e}'s system,
\begin{equation*}
  \begin{aligned}
  \norm{\nabla^2u}_{L^2}&\lesssim\norm{\rho\dot{u}}_{L^2}+\norm{\nabla P}_{L^2}\\
  &\lesssim\norm{\sqrt{\rho}u_t}_{L^2}+\norm{u}_{L^6}\norm{\nabla u}_{L^3}+\norm{\nabla\rho}_{L^2}\\
  &\lesssim\norm{\sqrt{\rho}u_t}_{L^2}+\norm{\nabla u}_{L^2}^{\frac{1}{2}}\norm{\nabla^2 u}_{L^2}^{\frac{1}{2}}+\norm{\nabla\rho}_{L^2}\\
  &\leq\epsilon\norm{\nabla^2 u}_{L^2}+C(\norm{\sqrt{\rho}u_t}_{L^2}+\norm{\nabla\rho}_{L^2})+C(\epsilon)
  \end{aligned}
\end{equation*}
which yields that
\begin{equation}\label{2rd-gradient-u-domain}
  \norm{\nabla^2u}_{L^2}\leq C(\norm{\sqrt{\rho}u_t}_{L^2}+\norm{\nabla\rho}_{L^2}+1).
\end{equation}
Then combining \eqref{estimate-2-domain0}-\eqref{2rd-gradient-u-domain} and \eqref{Hodge-decomposition} gives that for sufficiently small $\epsilon>0$,
\begin{equation*}
  \frac{d}{dt}\norm{\sqrt{\rho}u_t}_{L^2}^2+C_0\norm{\nabla u_t}_{L^2}^2\leq C(1+\norm{\sqrt{\rho}\dot{u}}_{L^2})(\norm{\sqrt{\rho}u_t}_{L^2}^2+\norm{\nabla\rho}_{L^2}^2)+C.
\end{equation*}
\end{proof}

Next we give two lemmas used later to control the density gradient $\nabla\rho$.

\begin{lem}[Beale-Kato-Majda type inequality]\label{lem-BKM-inequality-domain}
 Let $\Omega\subset\mathbb{R}^3$ be a simply connected bounded domain with smooth boundary and $u$ satisfy \eqref{Naiver-slip-condition}. Then for any $q\in(3,\infty)$, there exists a constant $C>0$ such that
 \begin{equation}\label{gradient-u-infty-domain}
   \norm{\nabla u}_{L^{\infty}}\leq C\norm{(\div u, \curl u)}_{L^{\infty}}\ln(e+\norm{\nabla^2u}_{L^q}(1+\norm{(\div u, \curl u)}_{L^{\infty}})^{-1})+C(\norm{\nabla u}_{L^2}+1).
 \end{equation}
\end{lem}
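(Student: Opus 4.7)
The approach will combine a logarithmic Sobolev inequality in the spirit of \eqref{new-inequality} with a BMO-type estimate for $\nabla u$ derived from Hodge theory under the Navier-slip boundary condition, and then perform a scaling optimization to sharpen the logarithm into the claimed scale-invariant form.

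First I would establish a BMO control
\[
\norm{\nabla u}_{BMO(\Omega)}\leq C\norm{(\div u,\curl u)}_{L^{\infty}}+C\norm{\nabla u}_{L^{2}}
\]
as a consequence of the Hodge decomposition \eqref{Hodge-decomposition} combined with Calder\'on--Zygmund theory adapted to a simply connected smooth domain with Navier-slip boundary: $\nabla u$ is recoverable from $\div u$ and $\curl u$ via a Biot--Savart-type singular integral plus harmonic boundary correctors, the former controlled in BMO by the $L^{\infty}$ norm of the data and the latter in $L^{2}$ by the energy. Applying \eqref{new-inequality} componentwise to $\nabla u$ then yields the ``raw'' bound $\norm{\nabla u}_{L^{\infty}}\leq C\norm{\nabla u}_{BMO}\ln(e+\norm{\nabla^{2}u}_{L^{q}})+C$, which already gives a non-scale-invariant version of Beale--Kato--Majda.

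To upgrade the logarithm into the ratio $\norm{\nabla^{2}u}_{L^{q}}(1+\norm{(\div u,\curl u)}_{L^{\infty}})^{-1}$ demanded by \eqref{gradient-u-infty-domain}, I would split $u=u_{\leq N}+u_{>N}$ at frequency $N$, using the eigenfunction expansion of the Hodge Laplacian under $u\cdot n=0$, $\curl u\times n=0$ so that the decomposition respects the Navier-slip BC. For the low-frequency part, the BMO estimate and the spectral localization give
\[
\norm{\nabla u_{\leq N}}_{L^{\infty}}\leq C\ln(e+N)\norm{(\div u,\curl u)}_{L^{\infty}}+C\norm{\nabla u}_{L^{2}},
\]
while for the high-frequency part a Bernstein-type inequality yields, for $q>3$,
\[
\norm{\nabla u_{>N}}_{L^{\infty}}\leq C N^{3/q-1}\norm{\nabla^{2}u}_{L^{q}}.
\]
Summing and choosing $N\sim (\norm{\nabla^{2}u}_{L^{q}}/(1+\norm{(\div u,\curl u)}_{L^{\infty}}))^{q/(q-3)}$ to balance the two contributions produces exactly the right-hand side of \eqref{gradient-u-infty-domain}.

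The hardest step, and the reason this is a genuine modification of \cite{Cai-Li2023}, is implementing the dyadic decomposition on the bounded domain while preserving the Navier-slip BC, since the usual Littlewood--Paley machinery is unavailable. My plan is to rely on the spectral calculus of the Hodge Laplacian under these boundary conditions, whose eigenfunctions enjoy both the Bernstein inequality and the BMO boundedness of frequency projections, with constants depending only on $\Omega$. A lighter, more hands-on alternative closer to \cite{Cai-Li2023} is to bypass the explicit decomposition and run the optimization directly at the level of the two inequalities above, by tracking the $N$-dependence of the constants produced by a smooth cut-off applied to $\nabla u$. Either route, combined with the first two steps, yields \eqref{gradient-u-infty-domain}.
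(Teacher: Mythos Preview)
Your strategy is a genuinely different route from the paper's, and while the high-level structure (low/high decomposition, then optimize the cut-off) is parallel, the implementation diverges substantially. The paper never passes through BMO or spectral theory at all: it writes $u$ via the Green's matrix $K_{i,j}$ of the Lam\'e system (from \cite{Solonnikov1970,Solonnikov1971}), uses the pointwise kernel bounds $|\partial_x^\alpha\partial_y^\beta K_{i,j}|\lesssim |x-y|^{-1-|\alpha|-|\beta|}$, and splits the resulting singular integral with a physical-space cut-off $\eta_\delta(|x-y|)$. The near-field piece is bounded by $\delta^{(q-3)/q}\norm{\nabla^2 u}_{L^q}$ directly from H\"older, and the far-field piece by $(1-\ln\delta)\norm{(\div u,\curl u)}_{L^\infty}+\norm{\nabla u}_{L^2}$ after one integration by parts (using $K_{i,\cdot}\cdot n=0$ on $\partial\Omega$). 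The choice $\delta=\min\{1,(\norm{\nabla^2 u}_{L^q}(1+\norm{(\div u,\curl u)}_{L^\infty})^{-1})^{-q/(q-3)}\}$ is exactly your frequency optimization in disguise, with $\delta\sim N^{-1}$.

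What the paper's approach buys is concreteness: the Green's function estimates and the single cut-off avoid any need for Bernstein inequalities or BMO bounds for spectral projections of the Hodge Laplacian under Navier-slip conditions. Those are the pieces you yourself flag as the ``hardest step,'' and they are not off-the-shelf results; establishing them rigorously on a bounded domain with these boundary conditions would be a paper-length detour. Your ``lighter alternative'' of tracking the $N$-dependence through a smooth cut-off of $\nabla u$ is in fact much closer to what the paper actually does, except that the paper cuts off the \emph{kernel}, not the solution, which sidesteps the need to verify that the truncated piece still satisfies the boundary condition. If you pursue your spectral route you would owe a proof of the domain Bernstein inequality and the uniform BMO boundedness of the projections; the Green's function route avoids both.
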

One can refer to \cite{Cai-Li2023} (Lemma 2.7) for the detailed proof of this lemma. Here we make a slight modification as choosing
\begin{equation*}
  \delta=\min\{1,(\norm{\nabla^2u}_{L^q}(1+\norm{(\div u,\curl u)}_{L^{\infty}})^{-1})^{-q/(q-3)}\}
\end{equation*}
 instead of $\delta=\min\{1,(\norm{\nabla^2u}_{L^q}^{-q/(q-3)}\}$ in Lemma 2.7 of \cite{Cai-Li2023}. Now we give a simple sketch of the proof.
 \begin{proof}
   It follows from \cite{Solonnikov1970,Solonnikov1971} that $u$ can be expressed as
   \begin{equation*}
     \begin{aligned}
     u^i&=\int K_{i,j}(x,y)(\mu\Delta_yu^j+(\mu+\lambda)\nabla_y\div_y u^j)dy\\
     &:=\int K_{i,\cdot}(x,y)(\mu\Delta_yu+(\mu+\lambda)\nabla_y\div_y u)dy\\
     &=(2\mu+\lambda)\int K_{i,\cdot}(x,y)\cdot\nabla_y\div_yudy-\mu\int K_{i,\cdot}(x,y)\cdot\nabla_y\times\curl_yudy\\
     &=I_1+I_2,
     \end{aligned}
   \end{equation*}
   where $K=\{K_{i,j}\}$ with $K_{i,j}(x,y)\in C^{\infty}$ for $x\neq y$ is Green matrix of the Lam\'{e}'s system \eqref{Lame-system} and satisfies that for every multi-indexes $\alpha$ and $\beta$, there exists a constant $C_{\alpha,\beta}$ such that for all $x\neq y$ and $i,j$,
   \begin{equation*}
     |\pa_x^{\alpha}\pa_y^{\beta}K_{i,j}(x,y)|\leq C_{\alpha,\beta}|x-y|^{-1-|\alpha|-|\beta|}.
   \end{equation*}
   Let $\delta\in(0,1]$ be a constant to be determined later and introduce a cut-off function satisfying $\eta_{\delta}(x)=1$ for $|x|<\delta$, $\eta_{\delta}(x)=0$ for $|x|>2\delta$ and $|\nabla\eta_{\delta}|<C\delta^{-1}$. Then due to $K_{i,\cdot}(x,y)\cdot n=0$ on $\pa\Omega$,
   \begin{equation*}
     \begin{aligned}
     \nabla I_1&=(2\mu+\lambda)\int\eta_{\delta}(|x-y|)\nabla_xK_{i,\cdot}(x,y)\cdot\nabla_y\div_yudy\\
     &\quad+(2\mu+\lambda)\int\nabla_y\eta_{\delta}(|x-y|)\cdot\nabla_xK_{i,\cdot}(x,y)\div_yudy\\
     &\quad-(2\mu+\lambda)\int(1-\eta_{\delta}(|x-y|))\nabla_x\div_yK_{i,\cdot}(x,y)\div_yudy\\
     &\lesssim\left(\int_0^{\delta}r^{-2\frac{q}{q-1}}r^2dr\right)^{\frac{q-1}{q}}\norm{\nabla^2u}_{L^q}+\norm{\div u}_{L^{\infty}}\\
     &\qquad+\int_{\delta}^1r^{-3}r^2dr\norm{\div u}_{L^{\infty}}+(\int_1^{\infty}r^{-6}r^2dr)^{\frac{1}{2}}\norm{\nabla u}_{L^2}\\
     &\lesssim\delta^{\frac{q-3}{q}}\norm{\nabla^2u}_{L^q}+(1-\ln\delta)\norm{\div u}_{L^{\infty}}+\norm{\nabla u}_{L^2}.
     \end{aligned}
   \end{equation*}
   Similarly,
   \begin{equation*}
     \begin{aligned}
     \nabla I_2&=-\mu\int\eta_{\delta}(|x-y|)\nabla_xK_{i,\cdot}(x,y)\cdot\nabla_y\times\curl_yudy\\
     &\quad+\mu\int\nabla_y\eta_{\delta}(|x-y|)\times\nabla_xK_{i,\cdot}(x,y)\cdot\curl_yudy\\
     &\quad-\mu\int(1-\eta_{\delta}(|x-y|))\nabla_x\nabla_y\times K_{i,\cdot}(x,y)\cdot\curl_yudy\\
     &\lesssim\delta^{\frac{q-3}{q}}\norm{\nabla^2u}_{L^q}+(1-\ln\delta)\norm{\curl u}_{L^{\infty}}+\norm{\nabla u}_{L^2}.
     \end{aligned}
   \end{equation*}
   Therefore,
   \begin{equation*}
     \norm{\nabla u}_{L^{\infty}}\lesssim \delta^{\frac{q-3}{q}}\norm{\nabla^2u}_{L^q}+(1-\ln\delta)\norm{(\div u,\curl u)}_{L^{\infty}}+\norm{\nabla u}_{L^2}.
   \end{equation*}
   Here choosing $\delta=\min\{1,(\norm{\nabla^2u}_{L^q}(1+\norm{(\div u,\curl u)}_{L^{\infty}})^{-1})^{-q/(q-3)}\}$ gives \eqref{gradient-u-infty-domain} and completes the proof of Lemma \ref{lem-BKM-inequality-domain}.

 \end{proof}

Next lemma comes from \cite{Foias1988}, which can be used to deal with the special $\log$-type Gronwall's inequality.
\begin{lem}
  For $\alpha,\,\beta,\,\sigma>0$, it holds that
  \begin{equation}\label{essential-inequality1}
    \beta^2(1+\ln_{+}\sigma)\leq\frac{1}{2}\alpha^2\sigma^2+\beta^2(1+\ln_{+}\frac{\beta}{\alpha}),
  \end{equation}
  where $\ln_{+}a:=\max\{\ln a, 0\}$ for any $a>0$. In particular, this inequality leads to
  \begin{equation}\label{essential-inequality2}
    \beta^2\ln(e+\sigma)\leq C\alpha^2\sigma^2+C\beta^2\ln(e+\frac{\beta}{\alpha}).
  \end{equation}
  Here $C>0$ is a constant independent of $\alpha,\beta,\sigma$.
\end{lem}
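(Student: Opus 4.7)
The claim is a purely elementary inequality, so my plan is to reduce it to a one-variable convexity fact and then handle the $\ln_+$ bookkeeping by cases.

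First, I would dispose of the trivial situation $\beta=0$ (both sides vanish) and, by subtracting $\beta^2$ from both sides of \eqref{essential-inequality1}, reduce the claim to
\[
\beta^{2}\ln_{+}\sigma \;\leq\; \tfrac{1}{2}\alpha^{2}\sigma^{2} \;+\; \beta^{2}\ln_{+}\!\tfrac{\beta}{\alpha}.
\]
The key one-variable lemma is that $\ln t \leq \tfrac12 t^{2}$ for every $t>0$. This is immediate from the fact that $f(t):=\tfrac12 t^2 - \ln t$ satisfies $f'(t)=t-1/t$, so $f$ achieves its minimum at $t=1$ with $f(1)=\tfrac12>0$. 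Applying this lemma to $t=\alpha\sigma/\beta$ (which is legitimate since $\alpha,\beta,\sigma>0$) and multiplying through by $\beta^2$ gives the unsigned version
\begin{equation}\label{unsigned}
\beta^{2}\ln\sigma \;\leq\; \tfrac{1}{2}\alpha^{2}\sigma^{2} \;+\; \beta^{2}\ln\!\tfrac{\beta}{\alpha}.
\end{equation}

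The remaining work is to pass from $\ln$ to $\ln_{+}$, which I would do by a short case analysis. If $\sigma\leq 1$, the left-hand side of the reduced inequality is zero and the right-hand side is nonnegative, so there is nothing to prove. If $\sigma>1$ and $\beta/\alpha\geq 1$, both logarithms agree with their positive parts and \eqref{unsigned} is exactly what we want. The only remaining case is $\sigma>1$ and $\beta/\alpha<1$: here $\ln_{+}(\beta/\alpha)=0$ while $\ln(\beta/\alpha)<0$, so dropping the (negative) term $\beta^{2}\ln(\beta/\alpha)$ on the right of \eqref{unsigned} only strengthens the inequality, and again we are done. This completes the proof of \eqref{essential-inequality1}.

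For the second form \eqref{essential-inequality2}, I would use two absolute comparisons: $\ln(e+\sigma)\leq C_{1}(1+\ln_{+}\sigma)$ (check separately for $\sigma\leq 1$ and $\sigma\geq 1$, using $\ln(e+\sigma)\leq 1+\ln 2$ on the former and $\ln(e+\sigma)\leq \ln(2e\sigma)=C+\ln\sigma$ on the latter), and the analogous lower bound $1+\ln_{+}(\beta/\alpha)\leq C_{2}\ln(e+\beta/\alpha)$, obtained similarly. Substituting these into \eqref{essential-inequality1} yields \eqref{essential-inequality2} with $C=\max(C_1,C_1 C_2)$.

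There is no real obstacle here; the only subtlety is the bookkeeping between $\ln$ and $\ln_{+}$ in the case $\beta/\alpha<1<\sigma$, where one must notice that $\ln(\beta/\alpha)$ contributes a \emph{negative} quantity on the right-hand side of \eqref{unsigned} and hence can simply be discarded when passing to $\ln_{+}$.
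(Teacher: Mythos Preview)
Your proof is correct and is essentially the paper's argument in disguise: the paper maximizes $\sigma\mapsto\beta^2(1+\ln\sigma)-\tfrac12\alpha^2\sigma^2$ directly (finding the critical point $\sigma=\beta/\alpha$), while your substitution $t=\alpha\sigma/\beta$ normalizes this to the one-variable fact $\ln t\le\tfrac12 t^2$, which is the same optimization. You also spell out the $\ln_+$ case split and the passage from \eqref{essential-inequality1} to \eqref{essential-inequality2}, both of which the paper leaves to the reader.
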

\begin{proof}
  We consider the maximization problem for $f(\sigma)=\beta^2(1+\ln\sigma)-\frac{1}{2}\alpha^2\sigma^2$ for $\sigma>0$. Since the maximum occurs at $\sigma=\beta/\alpha$, we have
  \begin{equation*}
    \beta^2(1+\ln\sigma)-\frac{1}{2}\alpha^2\sigma^2\leq \beta^2(1+\ln\frac{\beta}{\alpha})-\frac{1}{2}\beta^2.
  \end{equation*}
   Distinguishing the case $0<\sigma\leq 1$ and $\sigma>1$, we get
   \begin{equation*}
     \beta^2(1+\ln_{+}\sigma)\leq\frac{1}{2}\alpha^2\sigma^2+\beta^2(1+\ln_{+}\frac{\beta}{\alpha}).
   \end{equation*}
\end{proof}

Now we give the estimate of $\nabla\rho$.

\begin{lem}
  Under the condition \eqref{Naiver-slip-condition}, it holds that for any $q\in(3,\min\{6,\tilde{q}\}]$,
  \begin{equation}\label{energy-2-domain2}
    \frac{d}{dt}\norm{\nabla\rho}_{L^q}^2\leq C(\norm{(\div u,\curl u)}_{L^{\infty}}+1)\norm{\nabla\rho}_{L^q}^2\ln(e+\norm{\nabla\rho}_{L^q}^2)+C(\norm{\nabla u_t}_{L^2}^2+\norm{\nabla^2u}_{L^2}^3).
  \end{equation}
\end{lem}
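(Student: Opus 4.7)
The plan is as follows. First, I will derive the evolution inequality for $\|\nabla\rho\|_{L^q}$ by differentiating the continuity equation $\rho_t+u\cdot\nabla\rho+\rho\div u=0$, multiplying by $q|\nabla\rho|^{q-2}\nabla\rho$, and integrating by parts. The boundary term drops because $u\cdot n=0$ on $\pa\Omega$, yielding
\begin{equation*}
\frac{d}{dt}\|\nabla\rho\|_{L^q}\lesssim (1+\|\nabla u\|_{L^\infty})\|\nabla\rho\|_{L^q}+\|\nabla^2 u\|_{L^q},
\end{equation*}
which, after multiplication by $2\|\nabla\rho\|_{L^q}$ and Young's inequality, gives an ODE for $\|\nabla\rho\|_{L^q}^2$ involving $\|\nabla u\|_{L^\infty}$ and $\|\nabla^2 u\|_{L^q}^2$ on the right-hand side.

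Next, I will control $\|\nabla^2 u\|_{L^q}$ by applying the Lamé elliptic estimate \eqref{Lame-estimate} to the momentum equation rewritten as $\mu\Delta u+(\mu+\lambda)\nabla\div u=\rho\dot u+\nabla P$, obtaining $\|\nabla^2 u\|_{L^q}\lesssim \|\rho\dot u\|_{L^q}+\|\nabla\rho\|_{L^q}$. For the material derivative part I interpolate: $\|\rho u_t\|_{L^q}$ is controlled via Gagliardo--Nirenberg between $\|\sqrt{\rho}u_t\|_{L^2}$ (already bounded) and $\|\nabla u_t\|_{L^2}$, while the convective term is estimated by $\|u\|_{L^\infty}\|\nabla u\|_{L^q}\lesssim \|\nabla u\|_{L^2}^{1/2}\|\nabla^2 u\|_{L^2}^{1/2}\cdot\|\nabla u\|_{L^2}^{1-\theta}\|\nabla^2 u\|_{L^2}^{\theta}$ with $\theta=\frac{3(q-2)}{2q}\in(\tfrac12,1]$; for $q\le 6$ this yields at most $\|\nabla^2 u\|_{L^2}^{3/2}$, so combining gives
\begin{equation*}
\|\nabla^2 u\|_{L^q}^2\lesssim 1+\|\nabla u_t\|_{L^2}^2+\|\nabla^2 u\|_{L^2}^3+\|\nabla\rho\|_{L^q}^2.
\end{equation*}

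The heart of the argument is to tame $\|\nabla\rho\|_{L^q}^2\|\nabla u\|_{L^\infty}$. Using Lemma \ref{lem-BKM-inequality-domain}, along with the uniform bound $\|\nabla u\|_{L^2}\le C$ from \eqref{energy-1},
\begin{equation*}
\|\nabla u\|_{L^\infty}\le C\|(\div u,\curl u)\|_{L^\infty}\ln(e+\sigma)+C,\qquad \sigma:=\frac{\|\nabla^2 u\|_{L^q}}{1+\|(\div u,\curl u)\|_{L^\infty}}.
\end{equation*}
Then I apply the Foias inequality \eqref{essential-inequality2} with $\beta=\|\nabla\rho\|_{L^q}$ and the choice $\alpha=(1+\|(\div u,\curl u)\|_{L^\infty})^{1/2}$, so that
\begin{equation*}
\|(\div u,\curl u)\|_{L^\infty}\alpha^2\sigma^2\lesssim \|\nabla^2 u\|_{L^q}^2,\qquad \frac{\|\nabla\rho\|_{L^q}}{\alpha}\le \|\nabla\rho\|_{L^q}.
\end{equation*}
Substituting the estimate for $\|\nabla^2 u\|_{L^q}^2$ from the previous step converts $\|\nabla^2 u\|_{L^q}^2$ into $1+\|\nabla u_t\|_{L^2}^2+\|\nabla^2 u\|_{L^2}^3+\|\nabla\rho\|_{L^q}^2$; the extra $\|\nabla\rho\|_{L^q}^2$ is absorbed into the $\|\nabla\rho\|_{L^q}^2\ln(e+\|\nabla\rho\|_{L^q}^2)$ term (since $\ln(e+\cdot)\ge 1$), producing exactly the inequality \eqref{energy-2-domain2}.

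The main obstacle is the calibration in the Foias inequality: the factor $\|(\div u,\curl u)\|_{L^\infty}$ in front of the $\ln$ is only $L^1$ in time under \eqref{assume2}, so it cannot be treated as bounded. The weight $(1+\|(\div u,\curl u)\|_{L^\infty})^{-1}$ built into the modified BKM bound \eqref{gradient-u-infty-domain} is precisely what allows the choice $\alpha^2=1+\|(\div u,\curl u)\|_{L^\infty}$ to exactly cancel this factor, producing a clean $\|\nabla u_t\|_{L^2}^2+\|\nabla^2 u\|_{L^2}^3$ contribution with no bad prefactor. Getting this cancellation is the delicate point; once it works, the remainder of the estimate is routine Young/Gagliardo--Nirenberg bookkeeping.
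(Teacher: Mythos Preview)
Your proposal is correct and follows essentially the same route as the paper's proof: derive the transport estimate for $\|\nabla\rho\|_{L^q}$, bound $\|\nabla^2 u\|_{L^q}$ via the Lam\'e estimate \eqref{Lame-estimate}, and combine the modified Beale--Kato--Majda inequality \eqref{gradient-u-infty-domain} with the Foias inequality \eqref{essential-inequality2} using exactly the calibration you describe. One small correction: $\|\sqrt{\rho}u_t\|_{L^2}$ is \emph{not} already bounded at this stage (only $\int_0^T\|\sqrt{\rho}\dot u\|_{L^2}^2\,dt$ is, from \eqref{energy-1}); the paper instead bounds $\|\rho u_t\|_{L^q}\le C\|u_t\|_{L^6}\le C\|\nabla u_t\|_{L^2}$ directly by Sobolev--Poincar\'e on the bounded simply connected domain with $u_t\cdot n=0$, which cleanly yields the stated right-hand side without any $\|\sqrt{\rho}u_t\|_{L^2}$ contamination.
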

\begin{proof}
From \eqref{gradient-density},
\begin{equation*}
    \frac{d}{dt}\norm{\nabla\rho}_{L^q}^2
    \leq C(\norm{\nabla u}_{L^{\infty}}+\norm{\div u}_{L^{\infty}}+1)\norm{\nabla\rho}_{L^q}^2+C\norm{\nabla^2 u}_{L^q}^2.
\end{equation*}
Applying \eqref{Lame-estimate} to $\eqref{CNS-eq}_2$ gives
\begin{equation}\label{2rd-gradient-u-Lp}
  \begin{aligned}
    \norm{\nabla^2u}_{L^q}&\lesssim\norm{\rho\dot{u}}_{L^q}+\norm{\nabla P}_{L^q}\\
    &\lesssim\norm{\nabla u_t}_{L^2}+\norm{\nabla\rho}_{L^q}+\norm{u}_{L^{\infty}}\norm{\nabla^2u}_{L^2}\\
    &\lesssim\norm{\nabla u_t}_{L^2}+\norm{\nabla\rho}_{L^q}+\norm{\nabla^2u}_{L^2}^{\frac{3}{2}}.
  \end{aligned}
\end{equation}
Then by \eqref{gradient-u-infty-domain} and \eqref{essential-inequality2},
\begin{equation*}
  \begin{aligned}
  \frac{d}{dt}\norm{\nabla\rho}_{L^q}^2&\leq C(\norm{(\div u,\curl u)}_{L^{\infty}}+1)\norm{\nabla\rho}_{L^q}^2\ln(e+\norm{\nabla^2u}_{L^q}(1+\norm{(\div u, \curl u)}_{L^{\infty}})^{-1})\\
  &\quad+C\norm{\nabla^2u}_{L^q}^2\\
  &\leq C(\norm{(\div u,\curl u)}_{L^{\infty}}+1)\norm{\nabla\rho}_{L^q}^2\ln(e+\norm{\nabla\rho}_{L^q})+C\norm{\nabla^2u}_{L^q}^2\\
  &\leq C(\norm{(\div u,\curl u)}_{L^{\infty}}+1)\norm{\nabla\rho}_{L^q}^2\ln(e+\norm{\nabla\rho}_{L^q}^2)+C(\norm{\nabla u_t}_{L^2}^2+\norm{\nabla^2u}_{L^2}^3).
  \end{aligned}
\end{equation*}
\end{proof}

Next we have the estimates on $\nabla^2u$ and $\nabla\rho$.
\begin{lem}\label{lem-energy-2-domain}
  Under the condition \eqref{Naiver-slip-condition}, it holds that for any $q\in(3,\min\{6,\tilde{q}\}]$,
  \begin{equation}\label{energy-2-domain-final}
    \sup_{0\leq t\leq T}(\norm{(\nabla^2u, \sqrt{\rho}u_t)}_{L^2}^2+\norm{\nabla\rho}_{L^q}^2)+\int_0^T\norm{\nabla u_t}_{L^2}^2dt\leq C, \quad 0\leq T<T^*.
  \end{equation}
\end{lem}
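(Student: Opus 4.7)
My plan is to combine the two differential inequalities \eqref{energy-2-domain1} and \eqref{energy-2-domain2} into a single energy inequality for the master quantity
\[
\Phi(t) := \beta\,\norm{\sqrt{\rho}u_t}_{L^2}^2 + \norm{\nabla\rho}_{L^q}^2 + e,
\]
where $\beta>0$ is fixed large enough that $\beta C_0$ exceeds the constant preceding $\norm{\nabla u_t}_{L^2}^2$ on the right of \eqref{energy-2-domain2}. Forming $\beta\cdot\eqref{energy-2-domain1}+\eqref{energy-2-domain2}$, the spurious $\norm{\nabla u_t}_{L^2}^2$ on the right is absorbed into the dissipation on the left, and, using the bounded-domain embedding $\norm{\nabla\rho}_{L^2}\leq C\norm{\nabla\rho}_{L^q}$, one obtains
\[
\Phi'(t) + \tilde C_0\,\norm{\nabla u_t}_{L^2}^2 \leq C\,A(t)\,\Phi\ln\Phi + C\,\norm{\nabla^2 u}_{L^2}^3 + C,
\]
where $A(t):=1+\norm{\sqrt{\rho}\dot u}_{L^2}+\norm{(\div u,\curl u)}_{L^\infty}$. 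Lemma \ref{lem-energy-1} (together with Cauchy--Schwarz on $\norm{\sqrt{\rho}\dot u}_{L^2}\in L^2\subset L^1$) and the standing hypothesis \eqref{assume2} ensure that $A\in L^1(0,T^*)$.

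Next I would invoke \eqref{2rd-gradient-u-domain} together with $\norm{\nabla\rho}_{L^2}\leq C\norm{\nabla\rho}_{L^q}$ to obtain $\norm{\nabla^2u}_{L^2}^2\leq C\,\Phi$, and hence $\norm{\nabla^2 u}_{L^2}^3\leq C\,\Phi^{3/2}$, producing the key ODE
\[
\Phi'(t) \leq C\,A(t)\,\Phi\ln\Phi + C\,\Phi^{3/2} + C.
\]
The super-linear term $C\,\Phi^{3/2}$ is the main obstacle, since it prevents a direct application of the usual log-type Gronwall lemma. I would handle it by a continuity/bootstrap argument: the compatibility condition \eqref{compatibility-condition} combined with $u_0\in D_0^1\cap D^2\hookrightarrow L^\infty$ gives $\Phi(0)<\infty$, and $\Phi$ is continuous on $[0,T^*)$. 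Let $M(t):=\sup_{0\leq s\leq t}\Phi(s)$. Because $A\in L^1(0,T^*)$, the interval $[0,T]$ (for any fixed $T<T^*$) can be partitioned into finitely many sub-intervals $[t_j,t_{j+1}]$ on which $\int_{t_j}^{t_{j+1}} A\,ds$ is smaller than a fixed small $\delta$, and each sub-interval can be further shortened so that its length does not exceed $\epsilon/\sqrt{M(t_j)+1}$ for a small $\epsilon$. Integrating the master inequality on each such piece yields an implicit bound of the form $M(t_{j+1})\leq M(t_j)+C\delta\,M(t_{j+1})\ln M(t_{j+1}) + C\,M(t_{j+1})^{3/2}(t_{j+1}-t_j) + C(t_{j+1}-t_j)$, which, for small enough $\delta$ and $\epsilon$, solves to give $M(t_{j+1})\leq 2M(t_j)+C(t_{j+1}-t_j)$. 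Finitely many iterations then yield $\sup_{0\leq t\leq T}\Phi(t)\leq C$.

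Once $\sup_{[0,T]}\Phi\leq C$ is in hand, integrating the master inequality over $[0,T]$ provides $\int_0^T\norm{\nabla u_t}_{L^2}^2\,dt\leq C$, and the remaining bound $\sup_{[0,T]}\norm{\nabla^2 u}_{L^2}^2\leq C$ follows at once from \eqref{2rd-gradient-u-domain} together with the bounds on $\norm{\sqrt{\rho}u_t}_{L^2}$ and $\norm{\nabla\rho}_{L^q}$. The delicate step is the bootstrap in the second paragraph, where the length of each sub-interval must be chosen compatibly with the local upper bound on $M$ so that the cubic contribution $CM^{3/2}(t_{j+1}-t_j)$ stays controlled; the continuity of $\Phi$, the finiteness of $\Phi(0)$ guaranteed by \eqref{compatibility-condition}, and the $L^1(0,T^*)$ integrability of $A$ are precisely what allow the iteration to terminate after finitely many steps.
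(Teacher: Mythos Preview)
Your combination of \eqref{energy-2-domain1} and \eqref{energy-2-domain2} into a master inequality for $\Phi$ is correct, but the bootstrap argument you propose for the superlinear term $C\Phi^{3/2}$ has a genuine gap. The coefficient in front of $\Phi^{3/2}$ is a \emph{constant}, not an integrable function of time; the model ODE $\Phi'=C\Phi^{3/2}$ blows up in finite time, and your iteration reproduces exactly this phenomenon. Your scheme doubles $M$ at each step, so after $N$ steps $M(t_N)\gtrsim 2^N M(t_0)$, while the admissible sub-interval length $t_{j+1}-t_j\leq\epsilon/\sqrt{M(t_j)+1}\lesssim\epsilon\,2^{-j/2}$ forms a convergent geometric series. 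The total time covered by all sub-intervals is therefore bounded by a fixed constant independent of $N$, so the partition cannot exhaust an arbitrary interval $[0,T]$ with $T<T^*$. (A related difficulty afflicts the term $C\delta\,M(t_{j+1})\ln M(t_{j+1})$: for any fixed $\delta>0$ this exceeds $\tfrac12 M(t_{j+1})$ once $\ln M(t_{j+1})>(2C\delta)^{-1}$, so it cannot be absorbed uniformly either.)

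The paper does not attempt a bootstrap; it eliminates the $\Phi^{3/2}$ term \emph{before} forming the Gronwall inequality. The cubic contribution $\norm{\nabla^2u}_{L^2}^3\leq C(\norm{\sqrt\rho\,u_t}_{L^2}^3+\norm{\nabla\rho}_{L^2}^3+1)$ is reduced as follows. Writing $u_t=\dot u-u\cdot\nabla u$ and using $\norm{u}_{L^\infty}\leq C\norm{\nabla^2u}_{L^2}^{1/2}$ together with \eqref{2rd-gradient-u-domain} gives $\norm{\sqrt\rho\,u_t}_{L^2}\leq C(\norm{\sqrt\rho\,\dot u}_{L^2}+\norm{\nabla\rho}_{L^2}^{1/2}+1)$, whence a Young inequality yields $\norm{\sqrt\rho\,u_t}_{L^2}^3\leq C(\norm{\sqrt\rho\,\dot u}_{L^2}+1)\norm{\sqrt\rho\,u_t}_{L^2}^2+C\norm{\nabla\rho}_{L^2}^2$ (this is \eqref{transition-estimate1}). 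Separately, Gagliardo--Nirenberg together with the $L^\infty$ bound on $\rho$ gives $\norm{\nabla\rho}_{L^2}^3\leq C\norm{\rho}_{L^{12q/(5q-6)}}^2\norm{\nabla\rho}_{L^q}\leq C\norm{\nabla\rho}_{L^q}$ (this is \eqref{transition-estimate2}). Both replacements are at most linear in $\Phi$ with coefficients lying in $L^1(0,T)$ by \eqref{energy-1} and \eqref{assume2}, so the resulting inequality is a genuine log-type Gronwall inequality of the form \eqref{Gronwall-inequality}, which closes directly without any continuation argument.
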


\begin{proof}
  Combining \eqref{energy-2-domain1}, \eqref{energy-2-domain2} and \eqref{2rd-gradient-u-domain} gives that for suitably small $\kappa>0$,
  \begin{equation}\label{estimate-2-domain-final-1}
    \begin{aligned}
    &\quad\frac{d}{dt}(\norm{\sqrt{\rho}u_t}_{L^2}^2+\kappa\norm{\nabla\rho}_{L^q}^2)+C_1\norm{\nabla u_t}_{L^2}^2\\
    &\leq C(\norm{\sqrt{\rho}\dot{u}}_{L^2}+\norm{(\div u,\curl u)}_{L^{\infty}}+1)(\norm{\sqrt{\rho}u_t}_{L^2}^2+\kappa\norm{\nabla\rho}_{L^q}^2+1)\ln(e+\norm{\nabla\rho}_{L^q}^2)\\
    &\quad +C(\norm{\sqrt{\rho}u_t}_{L^2}^3+\norm{\nabla\rho}_{L^2}^3+1).
    \end{aligned}
  \end{equation}
  Since by \eqref{2rd-gradient-u-domain}
  \begin{equation*}
    \begin{aligned}
    \norm{\sqrt{\rho}u_t}_{L^2}&\lesssim\norm{\sqrt{\rho}\dot{u}}_{L^2}+\norm{u\cdot\nabla u}_{L^2}\\
    &\lesssim\norm{\sqrt{\rho}\dot{u}}_{L^2}+\norm{u}_{L^{\infty}}\norm{\nabla u}_{L^2}\\
    &\lesssim\norm{\sqrt{\rho}\dot{u}}_{L^2}+\norm{\nabla^2u}_{L^2}^{\frac{1}{2}}\\
    &\leq\epsilon\norm{\sqrt{\rho}u_t}_{L^2}+C(\norm{\sqrt{\rho}\dot{u}}_{L^2}+\norm{\nabla\rho}_{L^2}^{\frac{1}{2}}+1),
    \end{aligned}
  \end{equation*}
  which yields
  \begin{equation*}
    \norm{\sqrt{\rho}u_t}_{L^2}\leq C(\norm{\sqrt{\rho}\dot{u}}_{L^2}+\norm{\nabla\rho}_{L^2}^{\frac{1}{2}}+1),
  \end{equation*}
  we have
  \begin{equation*}
    \begin{aligned}
    \norm{\sqrt{\rho}u_t}_{L^2}^3&\lesssim(\norm{\sqrt{\rho}\dot{u}}_{L^2}+1)\norm{\sqrt{\rho}u_t}_{L^2}^2+\norm{\nabla\rho}_{L^2}^{\frac{1}{2}}\norm{\sqrt{\rho}u_t}_{L^2}^2\\
    &\leq C(\norm{\sqrt{\rho}\dot{u}}_{L^2}+1)\norm{\sqrt{\rho}u_t}_{L^2}^2+\epsilon\norm{\sqrt{\rho}u_t}_{L^2}^3+C(\epsilon)\norm{\nabla\rho}_{L^2}\norm{\sqrt{\rho}u_t}_{L^2}
    \end{aligned}
  \end{equation*}
  which gives that
  \begin{equation}\label{transition-estimate1}
    \norm{\sqrt{\rho}u_t}_{L^2}^3\leq C(\norm{\sqrt{\rho}\dot{u}}_{L^2}+1)\norm{\sqrt{\rho}u_t}_{L^2}^2+C\norm{\nabla\rho}_{L^2}^2.
  \end{equation}
  Note that
  \begin{equation}\label{transition-estimate2}
    \norm{\nabla\rho}_{L^2}^3\leq C\norm{\rho}_{L^{\frac{12q}{5q-6}}}^2\norm{\nabla\rho}_{L^q}\leq C\norm{\nabla\rho}_{L^q}.
  \end{equation}
  Thus, inserting \eqref{transition-estimate1} and \eqref{transition-estimate2} into \eqref{estimate-2-domain-final-1} yields
  \begin{equation}\label{estimate-2-domain-final-2}
    \begin{aligned}
    &\quad\frac{d}{dt}(\norm{\sqrt{\rho}u_t}_{L^2}^2+\kappa\norm{\nabla\rho}_{L^q}^2)+C_1\norm{\nabla u_t}_{L^2}^2\\
    &\leq C(\norm{\sqrt{\rho}\dot{u}}_{L^2}+\norm{(\div u,\curl u)}_{L^{\infty}}+1)(\norm{\sqrt{\rho}u_t}_{L^2}^2+\kappa\norm{\nabla\rho}_{L^q}^2+1)\ln(e+\norm{\nabla\rho}_{L^q}^2).
    \end{aligned}
  \end{equation}
  Then the $\log$-type Gronwall's inequality \eqref{Gronwall-inequality} gives that
  \begin{equation*}
    \sup_{0\leq t\leq T}(\norm{\sqrt{\rho}u_t}_{L^2}^2+\norm{\nabla\rho}_{L^q}^2)+\int_0^T\norm{\nabla u_t}_{L^2}^2dt\leq C.
  \end{equation*}
  Back to \eqref{2rd-gradient-u-domain}, we have
  \begin{equation*}
    \sup_{0\leq t\leq T}\norm{\nabla^2u}_{L^2}\leq C.
  \end{equation*}
  Hence the proof of Lemma \ref{lem-energy-2-domain} is completed.
\end{proof}

\subsection{The extending of the existence time.}The combination of Lemma \ref{lem-energy-2-whole1} and Lemma \ref{lem-energy-2-whole2} or Lemma \ref{lem-energy-2-domain} is enough to extend the strong solutions of $(\rho,u)$ beyond $t\geq T^*$. Indeed, in view of \eqref{energy-2-whole} or \eqref{energy-2-D} and \eqref{energy-2-whole1} or \eqref{energy-2-domain-final}, the functions $(\rho,u)|_{t=T^*}=\lim_{t\rightarrow T^*}(\rho,u)$ satisfy the conditions imposed on the initial data \eqref{initial-data-condition} at the time $t=T^*$. In particular,
\begin{equation*}
  -\mu\Delta u-(\mu+\lambda)\nabla\div u+\nabla P|_{t=T^*}=\lim_{t\rightarrow T^*}(\rho u_t+\rho u\cdot\nabla u)=\sqrt{\rho}g|_{t=T^*},
\end{equation*}
with $g|_{t=T^*}\in L^2$. Thus, $(\rho,u)_{t=T^*}$ also satisfy \eqref{compatibility-condition}. Hence, we can take $(\rho,u)_{t=T^*}$ as the initial data and apply the local existence theorem \ref{thm-local-existence} to extend the local strong solution beyond $T^*$. This contradicts the assumption on $T^*$.

\medskip

\section*{\bf Data availability}
No data was used for the research described in the article.

\section*{\bf Conflicts of interest}

The authors declare no conflict of interest.

\end{document}